\title{Good Reduction for Endomorphisms of the Projective Line in Terms of the Branch Locus}
\author{Jung Kyu CANCI
\\
\small{Universit\"{a}t Basel, Mathematisches Institut, Spiegelgasse $1$, CH-$4051$ Basel}\\
\small\texttt{jungkyu.canci@unibas.ch}}
\newtheorem{Def}{Definition}[section]
\newtheorem{Teo}{Theorem}[section]
\newtheorem{Lemma}{Lemma}[section]
\newtheorem{Cor}{Corollary}[section]
\newtheorem{Rem}{Remark}[section]
\newcommand{\Q}{\mathbb{Q}}
\newcommand{\N}{\mathbb{N}}
\newcommand{\Z}{\mathbb{Z}}
\newcommand{\SR}{\mathbb{P}_1}
\newcommand{\nc}{\newcommand}
\nc{\mco}{\mathcal{O}}
\nc{\mcr}{\mathcal{R}}
\nc{\bdm}{\begin{displaymath}}
\nc{\edm}{\end{displaymath}}
\nc{\beq}{\begin{equation}}
\nc{\eeq}{\end{equation}}
\nc{\PGL}{{\rm PGL}}
\nc{\PGLv}{{\rm PGL}(2,(O_K)_v)}
\begin{document}

\maketitle

\begin{abstract}Let $K$ be a number field and $v$ a non archimedean valuation on $K$. We say that an endomorphism $\Phi\colon \SR\to \SR$ has good reduction at $v$ if there exists a model $\Psi$ for $\Phi$ such that $\deg\Psi_v$, the degree of the reduction of $\Psi$ modulo $v$, equals $\deg\Psi$ and $\Psi_v$ is separable. We prove a criterion for good reduction that is the natural generalization of a result due to Zannier in \cite{Uz3}. Our result is in connection with other two notions of good reduction, the simple and the critically good reduction. The last part of our article is dedicated to prove a characterization of the maps  whose iterates, in a certain sense, preserve the critically good reduction.
\end{abstract}

\section{Introduction} 
Let $K$ be a number field and $v$ a non archimedean valuation on $K$. Denote by $K_0$ the residue field at $v$ and $p$ its characteristic. Any rational map $\Phi\colon \SR\to\SR$ defined over $K$ can be written with algebraic integral coefficients in $K$ whose reduction modulo $v$ defines a reduced map $\Phi_v\colon\SR\to\SR$, thus with coefficients in $K_0$. We say that two endomorphisms $\Phi,\Psi\colon \SR\to\SR$ over $K$ are \emph{equivalent} if there exists an automorphism $A\in \PGL_2(K)$ such that $\Phi=\Psi\circ A$. 
We shall take in consideration the following definition of good reduction for endomorphism of $\SR$: 
\begin{Def}\label{gr}
We say that an endomorphism $\Phi\colon \SR\to\SR$ defined over $K$ has \emph{good reduction} at $v$ if there exists a map $\Psi$, equivalent to $\Phi$, such that the reduction $\Psi_v$ satisfies $\deg \Psi=\deg \Psi_v$ and $\Psi_v\notin K_0(x^p)$. 
We say that a map $\Phi$ defined over $K$ has \emph{potential good reduction} at $v$, if $\Phi$ has good reduction over a finite extension of $K$.
\end{Def}
The condition $\Psi_v\notin K_0(x^p)$ is equivalent to say that $\Psi_v$ is separable.  As usual, for any field $F$, let us denote by $\overline{F}$ its algebraic closure. We shall denote by $\mcr_\Phi$ the set in $\SR (\overline{K})$ of ramification points of a map $\Phi$. Suppose that $v$ is extended in some way to the whole $\overline{K}$. With abuse of notation, we shall denote with $v$ also the extended valuation. Let $(\Phi(\mcr_\Phi))_v$ be the subset of $\SR(\overline{K}_0)$ obtained from $\Phi(\mcr_\Phi)$ by reduction of its points modulo $v$. In the present article, it will be crucial the following condition
\beq\label{crv}
\# \Phi(\mcr_\Phi)=\# (\Phi(\mcr_\Phi))_v,
\eeq 
which is equivalent to say that for any pair of distinct points $P,Q\in \Phi(\mcr_\Phi)$ their reduction modulo $v$ remain distinct. 
 As already remarked in \cite{SzT}, the condition (\ref{crv}) does not depend on the particular choice of the extension of $v$ to $\overline{K}$. Indeed for any finite extension of $F$ of $K$ the Galois group ${\rm Gal}(F/K)$ acts transitively on the valuations over $F$ that extend the valuation $v$ over $K$. 
 
Recall that giving an endomorphism of $\SR(K)$ is the same of giving an element of $K(x)$ (i.e. rational functions).

An aim of this article is to give a proof of the following result. 

\begin{Teo}\label{mainT}
Let $K$, $v$, $K_0$ and $p$ be as above. Let $\Phi$ be a rational function in $K(x)$ of degree $d\geq 2$. 
Suppose that condition (\ref{crv}) holds and $\Phi$ does not have potential good reduction at $v$. 
 Then the characteristic $p$ divides the order of the monodromy group and some nonzero integers of the form $\left(\sum_{P\in A}e_\Phi(P)-\sum_{P\in B}e_\Phi(P)\right)$, where $A\subset \Phi^{-1}(\lambda)$ and $B\subset \Phi^{-1}(\mu)$, for each pair of two different branch values $\lambda,\mu\in \Phi(\mcr_\Phi)$.
\end{Teo}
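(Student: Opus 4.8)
The plan is to prove the theorem by contraposition, generalising the strategy of \cite{Uz3}: assuming condition~(\ref{crv}), I will show that if $p$ does not divide $|G|$, where $G$ is the monodromy group, or if one of the prescribed fibre‑sum integers fails to be a nonzero multiple of $p$, then $\Phi$ has potential good reduction at $v$. First I would replace $K$ by a finite extension $L$ and $v$ by a place $w\mid v$ of $L$ with residue field $L_0$, chosen so that $\Phi$, every point of $\mcr_\Phi$ and every branch value in $\Phi(\mcr_\Phi)$ is $L$‑rational; this costs nothing, since (\ref{crv}) is insensitive to the chosen extension (as recalled in the Introduction) and potential good reduction is by definition good reduction after such an extension. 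Composing with a target automorphism — which changes neither $|G|$ nor any integer of the form $\sum_{P\in A}e_\Phi(P)-\sum_{P\in B}e_\Phi(P)$, and which has good reduction after a further extension — I may normalise the two branch values treated in the second half of the argument to $0$ and $\infty$.

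Next I would establish that $p\mid |G|$. Suppose not. Since the ramification index $e_\Phi(P)$ divides the order of the local monodromy element around $\Phi(P)$, we get $p\nmid e_\Phi(P)$ for every $P$, so $\Phi$ is everywhere tamely ramified. Put $U=\SR\setminus\Phi(\mcr_\Phi)$ and let $\mathcal U\subset\mathbb P^1_{\mco_L}$ be the complement of the closures of the branch values; by~(\ref{crv}) these are pairwise disjoint sections, so $\mathcal U$ is smooth over $\mco_L$ with geometrically connected fibres, its special fibre being $\mathbb P^1_{L_0}$ minus the pairwise distinct reduced branch values. The restriction $\Phi\colon\Phi^{-1}(U)\to U$ is finite étale of degree $d$ with monodromy $G$; after base change to $\overline L$, Grothendieck's theorem on the specialisation of the tame (prime‑to‑$p$) fundamental group of such a family (SGA~1, Exp.~XIII) extends this cover to a finite étale $\mathcal V\to\mathcal U$ whose special fibre is again connected of degree $d$ with monodromy $G$. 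Let $\mathcal X$ be the normalisation of $\mathbb P^1_{\mco_L}$ in the function field of $\mathcal V$; by Abhyankar's lemma, tameness together with the regularity of the branch divisor forces $\mathcal X\to\mathbb P^1_{\mco_L}$ to be locally of Kummer type $u=t^{e_\Phi(P)}$ along each branch section, so $\mathcal X$ is smooth over $\mco_L$. Hence $\mathcal X_0$ is a smooth, proper, connected curve of the same genus as $\mathcal X_\eta=\SR$, i.e. (after a harmless finite extension giving a rational point) a copy of $\mathbb P^1_{L_0}$, and $\mathcal X_0\to\mathbb P^1_{L_0}$ is a separable morphism of degree $d$. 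Choosing an isomorphism $\mathcal X_0\cong\mathbb P^1_{L_0}$ produces a model of $\Phi$ whose reduction has degree $d$ and is separable — contradicting bad reduction. Therefore $p\mid |G|$.

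Finally I would treat the fibre‑sum integers. Fix distinct branch values $\lambda,\mu\in\Phi(\mcr_\Phi)$, normalised to $0$ and $\infty$, and pick a model $\Psi=\Phi\circ B$ for which $\deg\Psi_v$ is minimal (and, among such, $\Psi_v$ separable if possible); since $\Phi$ has bad reduction, either $\deg\Psi_v<d$ or $\Psi_v\in L_0(x^p)$. Writing $\Psi=F/G$ with $F,G\in\mco_L[x]$ coprime and some coefficient a unit, we have $F=c\prod_{P\in\Phi^{-1}(\lambda)}(x-P)^{e_\Phi(P)}$ and $G=c'\prod_{Q\in\Phi^{-1}(\mu)}(x-Q)^{e_\Phi(Q)}$, and the failure of good reduction manifests itself in the nontrivial common factor $h=\gcd(F_v,G_v)$, which records collisions among the reductions of $\Phi^{-1}(\lambda)$ and of $\Phi^{-1}(\mu)$. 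The core step is to pass to the semistable model of the cover $\Phi$ relative to its branch divisor — equivalently, the stable model of $\SR$ marked by the source points $\bigcup_i\Phi^{-1}(\lambda_i)$, the target remaining stable by~(\ref{crv}): bad reduction forces a source component tree with a main component carrying the limit of the optimal $\Psi$, together with at least one tail $T$. Such a $T$ meets the rest of the source at a single node, lying over a single point $c_T$ of the reduced target; as $\overline\lambda\ne\overline\mu$, at most one of these equals $c_T$, so comparing the degree of the limit map on $T$ with the ramification index at the node gives a congruence $\sum_{P\in A}e_\Phi(P)\equiv\sum_{P\in B}e_\Phi(P)\pmod p$, where $A\subset\Phi^{-1}(\lambda)$ and $B\subset\Phi^{-1}(\mu)$ are the subsets specialising onto $T$; moreover this integer is nonzero exactly because good reduction fails along $T$ (if it were zero for every tail, the tree could be recontracted to a single genus‑$0$ component with separable degree‑$d$ limit map, exactly as in the previous step). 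Letting $T$ vary, and combining the contributions of several tails when no single tail simultaneously records both $\lambda$ and $\mu$, yields for every ordered pair of distinct branch values the required $A,B$ with $0\ne\sum_{P\in A}e_\Phi(P)-\sum_{P\in B}e_\Phi(P)$ divisible by $p$.

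I expect the decisive difficulty to lie in that last paragraph: making the semistable (admissible) reduction of the cover explicit enough to exhibit the forced congruences precisely in the stated shape $\sum_{P\in A}e_\Phi(P)-\sum_{P\in B}e_\Phi(P)$, to guarantee the non‑vanishing of the relevant differences, and to cover uniformly all the ways in which the fibres over $\lambda$ and over $\mu$ can collapse — in particular the purely inseparable case $\deg\Psi_v=d$, where instead the ramification indices of $\Psi_v$ are all divisible by $p$ and the argument must be run with the node replaced by the inseparable locus.
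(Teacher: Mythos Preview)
Your proposal is a strategy rather than a proof, and it diverges from the paper in interesting ways.

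For the monodromy assertion $p\mid|G|$, you invoke the Grothendieck/Fulton/Beckmann specialisation of tame covers via SGA~1. This is a valid and classical route, and the paper itself acknowledges in the Introduction that the Fulton--Beckmann methods apply for primes not dividing $|G|$; the paper deliberately does \emph{not} take this route. Instead the paper treats the two hypotheses (``$p\nmid|G|$'' and ``some pair $\lambda,\mu$ admits no nonzero divisible fibre-sum'') uniformly: either one, via an argument modelled on \cite[Lemma~3.6]{Uz3}, forces each residue-field extension $K_0(\overline{y}_i)/K_0(\Phi_{i,v})$ to be separable, and the rest of the proof is the same in both cases. So your separate treatment of $p\mid|G|$ buys you a clean classical argument but leaves the second half no easier.

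For the fibre-sum integers, your semistable/admissible-cover picture is morally the same object as what the paper builds: the paper's tree of extensions $w_1,\ldots,w_h$ of the Gauss valuation on $K(\Phi)$, ordered by Lemma~5.1 and rooted by Lemma~5.2, \emph{is} the dual tree of the stable source model, and the affine changes $y_i=a^{-m_i}(a^m x-c_i)$ are exactly the blow-up charts. But here there is a genuine gap. You gesture at congruences coming from node ramification on tails and say you ``expect the decisive difficulty to lie in that last paragraph''; the paper fills precisely this gap with two concrete ingredients you do not supply. First, Section~4 uses Zannier's Puiseux/Dwork--Robba technique to produce a special integral model in which all points of every branch fibre are $v$-integers and, crucially, the reduced map $\Phi_v$ is separable --- this is what makes the Riemann--Hurwitz count available on \emph{every} component, including the inseparable case you flag as problematic. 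Second, Lemmas~5.3 and~5.4 carry out an exact combinatorial bookkeeping (the notion of a point ``counted'' in a fibre, the inequality $\sum_s\sum_\lambda\#\Phi_{i_s,v}^{-1}(\overline\lambda)\le |R_{i_0}|+2m$, and the degree inequality $\sum_k\deg\Phi_{i_k,v}\ge\max\{a_{i_0},b_{i_0}\}$) which, combined with Riemann--Hurwitz on each $\Phi_{i,v}$, forces $h=1$. Your proposal would need to reproduce both of these in the semistable language; the second is plausible, but your treatment of the separability issue (``the argument must be run with the node replaced by the inseparable locus'') is not an argument, and without it the Riemann--Hurwitz inequality on each component is unavailable.
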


The statement of the above theorem is a generalization of \cite[Theorem 1]{Uz3} where Zannier considered some endomorphisms $\Phi\colon\SR\to\SR$, with $\Phi=F/G$ where $F,G\in K[x]$ and $\deg(F-G)$ is as small as possible, as predicted by Mason's \emph{abc} inequality. Those above covers are unramified outside $\{0,1,\infty\}$ (this remark is contained in \cite{Uz1}) and as showed in \cite{Uz2} the ramification over 1 is all concentrated at the point at infinity. An important tool to treat these type of problems is the Riemann's Existence Theorem. It can be used in the more general setting of coverings between general curves, where one establishes a connection between the topological data of a covering and its field of definition. For example see in \cite{Schneps} the article by Birch where he gave an introduction in this topic and some references to earlier works due to Groethendieck and Fulton and later by Beckmann. The methods used by Fulton in \cite{Fulton} and by Beckmann in \cite{Beckmann} concern covers of general curves, but in our situation apply for primes that do not divide the order of the monodromy group. The Zannier's method, that we are going to generalize, uses completely new arguments and provides new sufficient conditions to have good reduction. The above cited results are not only for the case when $K$ is a number field. Also our arguments work in a more general setting, where essentially $K$ is a generic field equipped with a discrete valuation $v$ that  could be extended to $\overline{K}$ and condition (\ref{crv}) does not depend on the chosen extension of $v$. We have chosen to state our results in the case when $K$ is a number field, because it is the most important case for applications and to put our arguments in the setting used in some previous works, e.g. \cite{CanciPT} and \cite{SzT}.

The proof of Theorem \ref{mainT} is divided in two parts. We shall take a map $\Phi$ defined over $K$ satisfying condition (\ref{crv}). We shall assume that the characteristic $p$ does not divide the order of monodromy group or there exist  two different branch values $\lambda,\mu\in \Phi(\mcr_\Phi)$ such that $p$ does not divide each nonzero integers of the form $\left(\sum_{P\in A}e_\Phi(P)-\sum_{P\in B}e_\Phi(P)\right)$ where $A\subset \Phi^{-1}(\lambda), B\subset \Phi^{-1}(\mu)$. We shall prove that $\Phi$ has potential good reduction at $v$. The first part of our proof contains a generalization of Zannier's techniques used in \cite{Uz3} in order to see that for each rational function $\Phi$ described as above, there exists a map $\Psi$ equivalent to $\Phi$, such that the  points in the ramification fibers of $\Psi$ are $v$--integers and the reduction $\Psi_v$ is separable. The second part is essentially the new part of our proof. In this part we use the characterization of the Gauss norm on $K(x)$, with respect $v$ and $x$. This characterization affirms that the (Gauss) valuation associated to the Gauss norm is the unique one that extends $v$ to $K(x)$ such that the reduction $\bar{x}$, of $x$ modulo the Gauss valuation, is transcendental over $K_0$. More concretely, the residue field of $K(x)$, with respect the Gauss valuation, is $K_0(\bar{x})$ that is the field of fraction of the polynomials in $K_0[\bar{x}]$, where $\bar{x}$ is the representative of $x$ and is transcendental over $K_0$.  Furthermore, it is crucial the characterization of the good reduction for a map $\Phi$ defined over $K$: $\Phi$ has good reduction at $v$ if and only if the Gauss norm on $K(\Phi)$ with respect $v$ and $\Phi$ extends in a unique way to $K(x)$ and the extension given by the reduced fields is separable; see \cite[p.99]{Uz3} for a proof of the equivalence. This characterization combined with some arguments in which we apply the Riemann Hurwitz Formula will produce the second part of the proof of Theorem \ref{mainT}. 
The hypothesis of Theorem \ref{mainT} about the divisibility concerning the characteristic $p$ of the residue field is essentially the same as in Zannier's hypothesis. Indeed in \cite[Theorem 1]{Uz3} the divisibility condition is given with the unique pair $\{\lambda,\mu\}=\{0,\infty\}$. In the setting of Theorem\ref{mainT}, since the condition (\ref{crv}) holds for $\Phi$, for two branch values $\lambda,\mu$, we can assume that $\{\lambda,\mu\}=\{0,\infty\}$. Indeed it is enough to replace $\Phi$ with $A\circ\Phi$, where $A$ is associated to a $v$--invertible matrix (a matrix with $v$--integral coefficients whose determinant is a $v$--unit), that sends $\lambda$ and $\mu$ to $0$ and $\infty$, respectively. Note that $\Phi$ has good reduction at $v$ if and only if $A\circ\Phi$ has good reduction at $v$.

 In the literature there are several notions of good reduction for endomorphisms of $\SR$. In this article we shall take in consideration two other ones in addition to the above Definition \ref{gr}:

\begin{Def}[Simple good reduction]\label{sgr} We say that an endomorphism $\Phi$ of $\SR$, defined over $K$, has simple good reduction at $v$ if the reduced map $\Phi_v$ has the same degree of $\Phi$.
\end{Def}  The definition of simple good reduction is almost the same as the one in Definition \ref{gr} but in the simple good reduction we do not allow to change the model of $\Phi$. Note that in Definition \ref{sgr} we are not asking that $\Phi_v$ has to be separable. The notion of simple good reduction was introduced by Morton and  Silverman in \cite{MS}.

\begin{Def}[Critically good reduction] We say that an endomorphism $\Phi$ of $\SR$, defined over $K$, has critically good reduction at $v$ if condition (\ref{crv}) is verified and the same holds for the set $\mcr_\Phi$ of ramification points; that is $\# \mcr_\Phi=\# (\mcr_\Phi)_v$, where as above the set $(\mcr_\Phi)_v$ is the subset of $\SR(K_0)$ obtained from $\mcr_\Phi$ by reduction of its points modulo $v$.
\end{Def}
According to the autor's knowledge this last notion of good reduction was introduced by Szpiro and Tucker in \cite{SzT}.

In \cite{CanciPT} the authors investigate the connections between the simple and the critically good reduction. They proved that given a rational function $\Phi$ defined over $K$ such that condition (\ref{crv}) holds and $\Phi_v$ is separable, then $\Phi$ has critically good reduction at $v$ if and only if $\Phi$ has simple good reduction at $v$. Theorem \ref{mainT} provides under condition (\ref{crv}) a connection between these three notions of good reduction. 

After the preparation of the article \cite{CanciPT}, in a private communication Szpiro asked for which rational maps the critically good reduction is preserved under iteration. Therefore we studied the maps having the property given in the following definition:

 \begin{Def} We say that an endomorphism $\Phi$ of the projective line, defined over $K$, is  \emph{finitely
critical} if there exists a finite set $S$ of
valuations  of $K$, containing all the archimedean ones, such that all the iterates $\Phi^n$, with $n\geq 1$,
have critically good reduction at each valuation outside $S$.
\end{Def}
Note that the condition to be finitely
critical is stable under conjugation by elements of ${\rm PGL}_2(K)$. Indeed 
let $\Phi$ be a finitely critical map with associated finite set
$S$ of non archimedean valuations of critically bad reduction of all its iterates and the archimedean ones. For each $f\in {\rm
PGL}_2(K)$ let $\Phi^f=f^{-1}\circ\Phi\circ f$. Let $T$ be
the set of valuations of $K$ that are of simple bad reduction for
$f$; then each iterate of $\Phi^f$ (which is of the form
$(\Phi^f)^n=f^{-1}\circ\Phi^n\circ f$) has critically good
reduction at each prime outside $S\cup T$. This elementary
observation is quite important, because we are considering a map
with all its iterates. Hence we are, in a sense, studying the
dynamic associated to a map. Therefore it is important to know that the notion of finitely
critical map is a good dynamical condition; indeed the conjugation by elements of ${\rm
PGL}_2(K)$ preserves the dynamics. 

In \cite{CanciPT} we presented the map $\Phi(x)=x^2-x$ as an example of non finitely critical map. In fact $1/2$ is a critical point of $\Phi$ and $1/2$ is not a preperiodic point for $\Phi$, therefore its orbit $O_\Phi(1/2)$ is an infinite set of rational points. Thus, the set of primes of bad reduction for some iterate of $\Phi$ can not be finite. This example and studies about some particular families of rational maps (which we will present in Section \ref{cgr} as examples) suggested to us that the
condition
\begin{equation}\label{condition}\mathcal{R}_\Phi\subset {\rm\bf PrePer}(\Phi,\overline{K})\end{equation}
 is a necessary and sufficient condition for an endomorphism of $\SR$ to be finitely critical, where ${\rm\bf PrePer}(\Phi,\overline{K})$ denotes the set of preperiodic points for $\Phi$ defined over the algebraic closure of $K$. Our second result is the following one:
%

\begin{Teo}\label{iff}Let $\Phi$ be an endomorphism of $\SR$ of degree $\geq 2$ defined over a number field $K$. The map $\Phi$ is finitely critical if and only if the condition (\ref{condition}) holds.
\end{Teo}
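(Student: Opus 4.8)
The plan is to prove the two implications separately, the hard direction being that condition (\ref{condition}) implies finite criticality. First I would dispose of the easy direction: assume $\Phi$ is finitely critical with associated finite set $S$. Suppose some $P\in\mcr_\Phi$ is not preperiodic; then its forward orbit $O_\Phi(P)=\{\Phi^n(P):n\geq 0\}$ is infinite. Since $P$ is ramified for $\Phi$, the point $P$ is a ramification point of every iterate $\Phi^n$ with $n\geq 1$ (the chain rule for ramification indices: $e_{\Phi^n}(P)\geq e_\Phi(P)\geq 2$), and hence $\Phi^n(P)\in\Phi^n(\mcr_{\Phi^n})$ for all $n$. Because $\Phi^n$ has critically good reduction outside $S$, condition (\ref{crv}) forces the reductions of the points of $\Phi^n(\mcr_{\Phi^n})$ to stay distinct at every $v\notin S$; letting $n$ grow, the orbit $O_\Phi(P)$ is an infinite set of points of $\SR(\overline K)$ whose members remain pairwise distinct modulo every $v\notin S$. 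But by a standard height/finiteness argument (two distinct algebraic points can only fail to stay distinct modulo $v$ for finitely many $v$, and an infinite orbit produces infinitely many such coincidences unless $S$ is infinite — here one uses that the orbit lies over a number field of bounded degree, together with the product formula), this is impossible. Hence $\mcr_\Phi\subset{\rm\bf PrePer}(\Phi,\overline K)$. I would write this out carefully: the cleanest route is to observe that a non-preperiodic $P$ has orbit points of strictly growing Weil height, so infinitely many primes must appear in the relevant discriminants.

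For the converse, assume (\ref{condition}): every ramification point of $\Phi$ is preperiodic. The key structural fact is that $\mcr_{\Phi^n}\subseteq\bigcup_{j=0}^{n-1}\Phi^{-j}(\mcr_\Phi)$ by the chain rule, so $\Phi^n(\mcr_{\Phi^n})\subseteq\bigcup_{j=1}^{n}\Phi^{j}(\mcr_\Phi)$. Since each of the finitely many points of $\mcr_\Phi$ is preperiodic, the \emph{total} union $\bigcup_{n\geq 1}\Phi^n(\mcr_\Phi)$ is a \emph{finite} set $\mathcal{C}\subset\SR(\overline K)$ — this is precisely the post-critical orbit, and it is finite exactly because of (\ref{condition}). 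Now enlarge $K$ to a finite extension $L$ containing all points of $\mcr_\Phi$ and of $\mathcal{C}$, and choose a finite set $S$ of valuations of $L$ (including archimedean ones) large enough that: (i) $\Phi$ has simple good reduction outside $S$ — possible since only finitely many primes divide the resultant of a chosen model; (ii) the finitely many points of $\mathcal{C}$ remain pairwise distinct modulo every $v\notin S$ — possible by finiteness of $\mathcal{C}$ and the argument above; and (iii) the finitely many points of $\mcr_\Phi$ remain pairwise distinct modulo every $v\notin S$. Then for each $n$ and each $v\notin S$, the ramification set $\mcr_{\Phi^n}\subseteq\bigcup_{j=0}^{n-1}\Phi^{-j}(\mcr_\Phi)$ reduces injectively — because $\Phi$ itself, having simple good reduction of full degree at $v$, commutes with reduction and is a separable degree-$d$ map modulo $v$, so preimages of distinct points that reduce distinctly cannot collide; here one needs that points in $\mcr_\Phi$ together with their $\Phi$-preimage trees stay separated, which again follows from (i) and the finiteness of the sets involved by enlarging $S$ once more if necessary. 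Likewise $\Phi^n(\mcr_{\Phi^n})\subseteq\mathcal{C}$ reduces injectively by (ii). Hence $\#\mcr_{\Phi^n}=\#(\mcr_{\Phi^n})_v$ and $\#\Phi^n(\mcr_{\Phi^n})=\#(\Phi^n(\mcr_{\Phi^n}))_v$ for all $v\notin S$, which is exactly critically good reduction of $\Phi^n$ at every $v\notin S$. (We should also reduce back to $K$ using the stability under conjugation and base change noted in the text, or simply allow $S$ to be a set of valuations of $K$ lying below those of $L$.)

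The main obstacle is step (iii)/the injectivity of reduction on the full ramification sets $\mcr_{\Phi^n}$ for \emph{all} $n$ simultaneously, using only a \emph{finite} bad set $S$. The sets $\Phi^{-j}(\mcr_\Phi)$ grow without bound as $j\to\infty$, so one cannot just ask each finite set to reduce injectively prime by prime. The resolution is that once $\Phi$ has simple good reduction of degree $d$ and separable reduction at $v$ (true for $v\notin S$ after (i), and one may also invoke Theorem \ref{mainT} / the results of \cite{CanciPT} to upgrade simple to critically good reduction of $\Phi$ itself), reduction modulo $v$ is a \emph{degree-preserving, separable} ramified cover $\SR(\overline K_0)\to\SR(\overline K_0)$ that is compatible with $\Phi$; then distinctness of two points modulo $v$ is preserved by taking $\Phi$-preimages, so injectivity of reduction on $\mcr_\Phi$ propagates automatically to injectivity on every $\Phi^{-j}(\mcr_\Phi)$ and hence on $\mcr_{\Phi^n}$, with no further enlargement of $S$ needed. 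Making this propagation argument precise — in particular checking that a ramification point of $\Phi^n$ does not secretly reduce to a ramification point of the reduced map that it should not equal, which is where separability of $\Phi_v$ is essential — is the technical heart of the proof. The archimedean and finitely-many-exceptional places are absorbed into $S$ at the start, and the whole argument is visibly stable under the ${\rm PGL}_2(K)$-conjugation remarked on in the text, so no generality is lost by the normalizations made along the way.
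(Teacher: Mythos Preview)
Your treatment of the forward direction (finitely critical implies (\ref{condition})) is correct, though more elaborate than needed; the paper simply observes that a non-preperiodic critical point forces $\#\mathcal{B}_n\to\infty$, hence infinitely many valuations of critically bad reduction.

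For the converse there is a genuine gap in the propagation step. You correctly note that under simple good reduction, distinct points lying in \emph{different} fibers $\Phi^{-1}(\beta_1)$, $\Phi^{-1}(\beta_2)$ with $\bar\beta_1\neq\bar\beta_2$ stay distinct after reduction. But your claim that ``injectivity of reduction on $\mcr_\Phi$ propagates automatically to injectivity on every $\Phi^{-j}(\mcr_\Phi)$'' also requires distinct points within a \emph{single} fiber $\Phi^{-1}(\gamma)$ to remain distinct modulo $v$, and this can fail. If $\gamma\notin\Phi(\mcr_\Phi)$ but $\bar\gamma$ coincides with some branch value of $\Phi_v$, then the $d$ unramified points of $\Phi^{-1}(\gamma)$ must reduce into a fiber of $\Phi_v$ with strictly fewer than $d$ points, forcing collisions. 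Your conditions (i)--(iii) do not rule this out: they control $\mcr_\Phi$ and ${\rm PostCrit}_\Phi$ separately, but say nothing about a point of $\mcr_\Phi\setminus{\rm PostCrit}_\Phi$ (or, at later stages, a point of $\Phi^{-j}(\mcr_\Phi)$) colliding modulo $v$ with a point of $\Phi(\mcr_\Phi)$. Trying to repair this by enlarging $S$ step by step runs into precisely the obstacle you name: the preimage trees are infinite.

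The paper avoids the propagation entirely. Rather than proving $\#\mcr_{\Phi^n}=\#(\mcr_{\Phi^n})_v$ directly, it applies Theorem~\ref{CPT} (that is, \cite[Theorem~1.6]{CanciPT}) to each iterate $\Phi^n$, not just to $\Phi$. One checks only three things for $v\notin S$: that $\Phi^n$ has simple good reduction (Theorem~\ref{tsil}), that $(\Phi^n)_v=(\Phi_v)^n$ is separable (guaranteed by including in $S$ all valuations over primes $\leq\deg\Phi$), and that the branch locus $\Phi^n(\mcr_{\Phi^n})\subseteq{\rm PostCrit}_\Phi$ reduces injectively (your condition (ii)). Theorem~\ref{CPT} then \emph{outputs} critically good reduction of $\Phi^n$, and in particular the injectivity of reduction on $\mcr_{\Phi^n}$ comes for free --- no inductive propagation is needed. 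The key step you missed is to invoke \cite{CanciPT} at the level of $\Phi^n$ rather than only at the level of $\Phi$.
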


In literature there exists already a name for the maps satisfying condition (\ref{condition}) that is \emph{post--critically finite} maps. We decide to conserve the name \emph{finitely critical} because a priori the set of post--critically finite maps contains the set of finitely critical maps, but a priori is not clear that also the other inclusion holds. Our Theorem \ref{iff} affirms that the two sets are equal. 
The request on the degree to be $\geq 2$ is due only to have critical points, otherwise the notion of finitely critical map has no meaning. The proof of Theorem \ref{iff}, that we will presente here, is an application of \cite[Theorem 1.6]{CanciPT}.

The present article is organized in several sections: In Section \ref{nd} we set the notation that we shall use throughout the paper; Section \ref{gn} is dedicated to Gauss norm and we shall present some remarks and results that we shall use in the proof of Theorem \ref{mainT}; Section \ref{intF} contains the generalization of Zannier's techniques as described before. In the last two sections we give the proofs of Theorem \ref{mainT} and Theorem \ref{iff} respectively.

\subsection*{Acknowledgment} I would like to thank Umberto Zannier for the introduction on the problem studied in Theorem \ref{mainT}, in particular for pointing out the literature that was the starting point of the present work (the article by Birch in \cite{Schneps} and his article \cite{Uz3}) and for the helpful discussions. I am grateful also to Luzien Szpiro for the discussions that motivated the result written in Theorem \ref{iff}. A part of this article was written at ICERM during the semester \emph{Complex and Arithmetic Dynamics} in 2012, I would like to thank ICERM and the organizers of the semester for the hospitality. I would like to thank also Pietro Corvaja for useful discussions. I'm grateful to Laura Paladino for reading the article and pointing out some inaccuracies.

\section{Notation}\label{nd}

Throughout all the paper $K$ will be a number field and $v$ a non archimedean valuation on $K$. We shall denote by $\mco$ its ring of $v$-integers, that is $\mco=\{P\in K\mid v(P)\geq 0\}$ and $\mco^*=\{P\in K\mid v(P)= 0\}$ its group of $v$--units. Sometime we shall need to extend the valuation $v$ to some finite extension of $K$. Therefore we assume that $v$ is extended to the whole $\overline{K}$. Every notion of good reduction considered in this article does not depend on the chosen extension of $v$.  

Any endomorphism $\Phi$ of $\SR$ defined over $K$ admits a $v$--normalized form, i.e. an expression of the form $\Phi(x)=F(x)/G(x)$ where $F,G\in \mco[x]$ are polynomials with no common factors and at least one coefficient of $F$ and $G$ is a $v$--unit. Such a $v$--normalized form always exists because $\mco$ is a principal ideal domain.  If $\Phi$ is written in $v$--normalized form, it is well defined its reduction modulo $v$, that we denote by $\Phi_v$, obtained by reducing modulo $v$ of the coefficients of $F$ and $G$. The expression $\Phi_v$ for a endomorphisms $\Phi$ of $\SR$ will always denote its reduction modulo $v$, whereas we will use the overline to denote the reduction modulo $v$ for polynomials and for elements in $K$. The condition for a map $\Phi$ to have simple good reduction at $v$ is equivalent to saying that the homogeneous resultant of $F,G$ is a $v$--unit. We use the adjective homogeneous because we consider the resultant of the homogeneous polynomials associated to $F$ and $G$. Indeed otherwise, if we consider the resultant of the (non homogeneous) polynomials $F$ and $G$ we can have some problems when the degrees of the polynomials $F$ and $G$ are different and the leading coefficient of the maximal degree polynomial is not $v$--invertible.  For example, with  $\Phi(x)=(2x^2+1)/x$ where $v$ is the valuation on $\mathbb{Q}$ associated to $2$, we have that the homogeneous resultant is 2 and the resultant of $2x^2+1$ and $x$ is 1. See \cite{Lang} for more details about the resultant of two polynomials.

 Recall that the Gauss valuation on the ring $\mco[x]$ (with respect to $x$ and $v$) is the function $\mco[x]\to \Z$
$$a^nx^n+a_{n-1}x^{n-1}+\ldots+a_0\mapsto \min\{v(a_n), v(a_{n-1}),\ldots,v(a_0)\}.$$
We extend in the usual way the Gauss valuation on the field $K(x)$. The Gauss valuation on $K(x)$ is discrete and the residue field is $K_0(\bar{x})$, where as explained in the introduction $\bar{x}$ denotes the reduction modulo $v$ of $x$. 

\section{Covers associated to extensions of Gauss norm}\label{gn}

In the proof of Theorem \ref{mainT}, it will be crucial the characterization of good reduction given in the introduction. We restate it in the following remark in order to have a precise reference in what follows:
\begin{Rem}\label{eqgr}
$\Phi$ has good reduction at $v$ if and only if the Gauss valuation on $K(\Phi)$ (with respect $\Phi$ and the valuation $v$) admits precisely one extension $w$ on $K(x)$, unramified and the extension of residue fields is separable and regular over $K_0$.
\end{Rem}
Zannier gave the proof of the statement in Remark \ref{eqgr} in \cite[Page 99]{Uz3}. In his proof we see that in the above statement the valuation $w$ is the Gauss valuation on $K(x)$, with respect a suitable transcendental element $y\in K(x)$ such that $K(x)=K(y)$.

The next two results about extensions of norms will be quite important in our proof of Theorem \ref{mainT}.

\vspace{5mm}
\noindent(THEOREM 9.14 in \cite{Jacobson}). Let $|\cdot |$ be an absolute value on the field $F$. Let $\hat{F}$ be the corresponding
completion of $F$, and let $E = F(u)$, where $u$ is algebraic over $F$ with minimum
polynomial $f(T)$ over $F$. Let $f_1(T),\ldots,f_h(T)$ be the distinct monic irreducible
factors of $f(T)$ in $\hat{F}[T]$. Then there are exactly $h$ extensions of $|\cdot|$ to absolute
values on $E$. The corresponding completions are isomorphic to the fields
$\hat{F}[T]/(f_j(T))$, $1 \leq j \leq h$, and the local degree are $n_i = \deg f_i(T)$.

\vspace{5mm}
\noindent (THEOREM 9.15. in \cite{Jacobson}) Let $F$ be a field with a non-archimedean absolute value
$|\cdot|$, $E$ an extension field of $F$ such that $[E:F] =n < \infty$, and let $|\cdot|_1,\ldots,|\cdot|_h$
be the extensions of $|\cdot|$ to absolute values on E. Let $e_i$ and $f_i$ be the ramification
index and residue degree of $E/F$ relative to $|\cdot|_i$ respectively. Then $\sum_{i=1}^h e_if_i \leq n$ and $\sum_{i=1}^h e_if_i = n$ 
if $|\cdot|$ is discrete and $E/F$ is separable.

\vspace{5mm}
Here we shall use the previous two results with $F=K(\Phi(x))$ with $\Phi(x)\in K(x)$ and $u=x$. If we denote $\Phi(x)=F(x)/G(x)$ with $F(x),G(x)\in K[x]$, then $x$ is a zero of the irreducible polynomial $f(T)\in K(\Phi)$ defined by $f(T)=F(T)-\Phi G(T)$. Therefore $n$ is equal to the degree of $\Phi$. The norm $|\cdot|$ over $F$  will be the Gauss norm (with respect $\Phi$ and $v$).

We shall use the following lemma which is a sort of generalization of Lemma 3.1 in \cite{Uz3}. 

\begin{Lemma}\label{e3.1}
Let $\Phi(x)\in K(x)$ be a non constant rational function. Suppose that  $w_1,\ldots,w_h$ are all the extensions of $v$ to $K(x)$ such that they induce the Gauss valuation on $K(\Phi)$ with respect to $\Phi$. Then there exists a finite extension $L$ of $K$, such that for each index $i\in\{1,\ldots h\}$ the valuation $w_i$ extends to a valuation over $L(x)$, that we still denote by  $w_i$, and there is an $y_i\in L(x)$ such that $L(x)=L(y_i)$ and $w_i$ is the Gauss valuation with respect $y_i$ and $v$. 
Furthermore the $y_i$'s can be chosen such that there exist an element $a\in L$ and an integer $m\geq 0$ such that for each $i\in\{1,\ldots,h\}$, there exist an integer $m_i\in\N$ and an integer $c_i\in \mco$ such that either $c_i=0$ or $v(c_i) < m_i$ and 
\beq\label{yi}a^mx=a^{m_i}y_i+c_i.\eeq \end{Lemma}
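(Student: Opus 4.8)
The plan is to work one valuation $w_i$ at a time and use the structure theory of valuations on rational function fields in one variable (the Riemann--Zariski / Abhyankar picture): since $w_i$ restricts to the Gauss valuation on $K(\Phi)$, which is already a valuation with residue field of transcendence degree $1$ over $K_0$, the extension $K(x)/K(\Phi)$ is finite and hence $w_i$ itself has residue field $\overline{K(x)}^{w_i}$ of transcendence degree $1$ over $K_0$ as well — i.e.\ $w_i$ is a so-called \emph{residually transcendental} (divisorial) extension of $v$ to $K(x)$. A classical fact about such valuations (going back to work on extensions of valuations to $K(x)$, e.g.\ the theory of ``minimal pairs'' / key polynomials, but one can also extract it directly from the two Theorems of Jacobson quoted above applied to $F=K(\Phi)$, $E=K(x)$, $u=x$, together with the explicit description of completions) is that after passing to a suitable finite extension $L/K$, the valuation $w_i$ on $L(x)$ becomes the Gauss valuation with respect to some generator $y_i$ of $L(x)/L$, and moreover $y_i$ can be taken of the form $y_i = (x-b_i)/\pi_i^{k_i}$ for suitable $b_i\in L$ and a suitable element $\pi_i$ of $L$ with $v(\pi_i)>0$ — this is exactly the shape of equation (\ref{yi}) once one clears denominators.

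First I would take $L/K$ finite large enough that all the finitely many factors $f_j(T)$ of $f(T)=F(T)-\Phi\,G(T)$ over the completion $\widehat{K(\Phi)}$ acquire their ``centers'' in $L$, i.e.\ large enough that each $w_i$, viewed via Theorem 9.14 of \cite{Jacobson} as coming from an irreducible factor, has a center on $\SR_L$ that is an $L$-rational point (or the point at infinity); concretely, I would enlarge $L$ so that for each $i$ there is $b_i\in L\cup\{\infty\}$ which is the reduction, under $w_i$ extended to $\overline{K}$, of $x$ (after a preliminary $\PGL_2(L)$-move sending that reduction to a finite point we may assume $b_i\in L$). Then $w_i(x-b_i)>0$; let $k_i:=w_i(x-b_i)$ normalized against $v$, choose $\pi_i\in L$ with $v(\pi_i)=k_i$ (again enlarging $L$ if a root is needed so that $k_i$ is realized, or working with the value group directly), and set $y_i:=(x-b_i)/\pi_i$. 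One checks that $w_i$ is the Gauss valuation with respect to $y_i$: its restriction to $L[y_i]$ sends a polynomial to the min of the $v$-values of its coefficients because $w_i(y_i)=0$ and $\bar y_i$ is transcendental over $K_0$ (transcendence degree $1$ of the residue field forces this), and this pins down the valuation uniquely by the characterization of the Gauss norm recalled in the Introduction. Since $L(x)=L(y_i)$, this is the first assertion.

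For the ``furthermore'' part I would now homogenize the finitely many relations $\pi_i\,y_i = x-b_i$ into a single one. Put $a$ equal to a common element of $L$ with $v(a)>0$ dividing (in the value-group sense) all the $\pi_i$ and all the $b_i$ that need it — for instance, after one more finite extension, an element whose value generates the relevant finitely generated subgroup of the value group; write $\pi_i = (\text{unit})\cdot a^{m_i'}$ and absorb units into $y_i$, and write the constant term in the normalized shape $a^m x = a^{m_i} y_i + c_i$ by choosing $m$ large enough that $a^m b_i$ is a $v$-integer for every $i$ and setting $c_i := -a^m b_i$ (or $c_i=0$ when $b_i=0$); the condition $v(c_i)<m_i$ or $c_i=0$ is arranged by picking, for each $i$, the exponent $m_i$ just large enough — if $v(c_i)\ge m_i$ one can increase $m_i$ and rescale $y_i$ accordingly, or replace $b_i$ by a better representative of the same center. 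All of this is a bounded amount of bookkeeping over a single finite extension $L$ obtained by taking the compositum of the finitely many extensions needed above.

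The main obstacle I expect is the simultaneous normalization: producing \emph{one} element $a$ and \emph{one} exponent $m$ that work for \emph{all} $i=1,\dots,h$ at once, with the inequality $v(c_i)<m_i$ holding in each fiber, rather than a separate gadget per valuation. This is where one must be careful that enlarging $L$ to realize the value $k_i$ for one $i$ does not spoil the already-chosen data for another $j$, and that the centers $b_i$ of distinct $w_i$ (which are genuinely distinct points of $\SR$, since the $w_i$ are distinct residually-transcendental valuations lying over the same Gauss valuation on $K(\Phi)$) can be cleared to $v$-integers uniformly. The distinctness of the centers is what lets condition (\ref{crv})-type reductions stay separated later, so it is worth recording; but for the lemma itself the crux is purely the uniform choice of $a$ and $m$, and this is handled by passing to the compositum and taking values in the common finitely generated value group — no new idea beyond that is needed.
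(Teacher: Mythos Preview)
Your overall strategy matches the paper's: both reduce the first assertion to the fact that every residually transcendental extension of $v$ to $K(x)$ is, over a suitable finite extension $L$ of $K$, the Gauss valuation with respect to some $y_i=(x-b_i)/\pi_i^{k_i}$. The paper simply imports this as \cite[Lemma~3.1]{Uz3} together with \cite[Remark~3.5]{Uz3}. Where your sketch has a genuine gap is in the one-step argument you offer for it. You take $b_i$ to be a lift of the reduction of $x$ under $w_i$, set $k_i=w_i(x-b_i)$, put $y_i=(x-b_i)/\pi_i$ with $v(\pi_i)=k_i$, and then assert that $\bar y_i$ is transcendental over $K_0$ ``because the residue field has transcendence degree~$1$''. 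That inference is false: the residue field of $w_i$ does have transcendence degree~$1$ over $K_0$, but nothing forces this particular element $\bar y_i$ to be a transcendence generator. Concretely, over $K=\Q_p$ consider the valuation $w$ on $K(x)$ for which $z=(x-p)/p^2$ has transcendental reduction; then $w(x)=1$, so your recipe gives $b=0$, $k=1$, $y=x/p$, yet $\bar y=\bar 1\in K_0$. One must iterate the ``center and rescale'' step, and this is exactly the content of the minimal-pair theory you allude to (and of Zannier's lemma): the process terminates in finitely many steps and the end result is still of the linear shape $(x-b)/a^{k}$, but it is not the single step you wrote down. The two Jacobson theorems quoted in the paper do not give this either; they only count extensions and relate $e_if_i$ to $n$.

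You also misplace the difficulty in the ``furthermore'' part. The simultaneous normalization you flag as the main obstacle is in fact routine once the $\hat y_i$ are in hand: the paper takes $L$ to be the compositum of the finitely many extensions produced above, lets $a\in L$ be a uniformizer for $v$, and sets $m=\max\{-\min_i w_i(x),\,0\}$, so that $a^m x$ is a $w_i$-integer for every $i$ at once; writing $a^m x=a_i\hat y_i+b_i$ and using that $w_i$ is Gauss in $\hat y_i$ forces $a_i,b_i\in\mco$, after which one writes $a_i=u_i a^{m_i}$ with $u_i\in\mco^*$. Finally, the dichotomy ``$c_i=0$ or $v(c_i)<m_i$'' is obtained not by \emph{increasing} $m_i$ as you suggest (that would replace $y_i$ by $y_i/a$, which has $w_i$-value $-1$ and so destroys the Gauss property), but by absorbing the constant into $y_i$ when $v(b_i)\ge m_i$: replace $\hat y_i$ by $u_i^{-1}y_i - u_i^{-1}a^{-m_i}b_i$ to land at $a^m x=a^{m_i}y_i$, i.e.\ $c_i=0$. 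Your alternative ``replace $b_i$ by a better representative of the same center'' is the right instinct, and this is how it is implemented.
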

\begin{proof}
By \cite[Lemma 3.1]{Uz3}, for each index $i\in\{1,\ldots,h\}$ there exist a finite extension $K_i$ of $K$, an extension of $w_i$ to $K_i(x)$ and an element $\hat{y_i}\in K_i(x)$ such that $K_i(\hat{y_i})=K_i(x)$ and $w_i$ is the Gauss norm on $K_i(\hat{y_i})$ with respect to $\hat{y_i}$ and $v$. 
Choose as $L$ a finite extension of $K$ containing each field $K_i$ for all $i\in \{1,\ldots,h\}$. For each index $i$ we take the extension of $w_i$  over $L(x)$, that with abuse of notation we still denote by $w_i$. Therefore,  for all $i\in \{1,\ldots,h\}$, $w_i$ is the Gauss valuation of $L(x)$ with respect $\hat{y}_i$ and $v$.  Let  $a\in L$ be a uniformizer of $v$ over $L$. Let $m=\max\{-\min_{i\in \{1,\ldots,h\}}w_i( x),0\}$. By using the same Zannier's argument in \cite[Remark 3.5]{Uz3} we can choose the $\hat{y}_i$'s such that
\beq \label{a_ib_i}a^m x=a_i\hat{y}_i+b_i,\eeq
with $a_i, b_i\in L$. But the fact that $w_i(a^m x)\geq 0$ (by our choice of $m$) implies that $a_i, b_i$ are $v$--integers, since $w_i$ is the Gauss valuation with respect the element $\hat{y}_i$. For each $i\in\{1,\ldots,h\}$, there exists $m_i\geq 0$ such that  $v(a_i)=v(a)^{m_i}$. Hence there exists an $u_i\in\mco^*$ such that  $a_i=u_ia^{m_i}$. Suppose  $v(b_i)\geq m_i$, then take $y_i$ such that $\hat{y}_i=u_i^{-1}y_i-u^{-1}a^{-m_i}b_i$. Hence $a^m x=a^{m_i}y_i$. If $v(b_i)<m_i$ take just $\hat{y}_i=u_i^{-1}y_i$ and so we have that the $c_i$ in the statement of the lemma is equal to $b_i$. 
\end{proof}


\begin{Rem}\label{resdeg}{\upshape According with the notation used in Lemma \ref{e3.1} we have that $w_i$ is the Gauss norm on $L(y_i)$ with respect $y_i$. We consider the field extension $L(y_i)/L(\Phi_i(y_i))$ where $\Phi_i(y_i)=\Phi(x)=\Phi(a^{m_i}y_i+c_i)$, so $\Phi_i$ is again the morphisms $\Phi$ but viewed with respect the variable $y_i$. Hence, if $\Phi$ is written as $\Phi(x)=F(x)/G(x)$ with $F(x),G(x) \in \mco[x]$ coprime polynomials, we have that
$\Phi_i(y_i)=F(a^{m_i}y_i+c_i)/G(a^{m_i}y_i+c_i)$. Let  $\Phi_{i,v}(y_i)$ be the reduction modulo $v$ of $\Phi_i$ (so we are taking the reduction modulo $v$ of the $v$--normalized form of $\Phi_i$ with respect $y_i$). 
Let us denote $L_0$ the reduced field of $L$ with respect the valuation $v$. For each $i\in \{1,\ldots,h\}$, the reduced fields of $L(y_i)$ and $L(\Phi(y_i))$ with respect $w_i$ are $L_0(\overline{y}_i)$ and $L_0(\Phi_{i,v}(\overline{y}_i)$ respectively. Therefore the extension of reduced fields are $L_0(\overline{y}_i)/ L_0(\Phi_{i,v}(\overline{y}_i))$ of degree $[L_0(\overline{y}_i): L_0(\Phi_{i,v}(\overline{y}_i))]$, which is equal to the degree of the reduced map $\Phi_{i,v}(y_i)$.  In order to apply Theorem 9.14 and Theorem 9.15 in \cite{Jacobson}, we consider the same setting give before Lemma \ref{e3.1}. Let $f_1,\ldots,f_h$ be the irreducible factors of $f(T)=F(T)-\Phi G(T)$ in $\widehat{L(\Phi)}[T]$, where $\widehat{L(\Phi)}$ denotes the completion of $L(\Phi)$ with respect the Gauss norm. Let us denote $\widehat{L(x)}_i$ the completion of $L(x)$ with respect the Gauss norm $w_i$, for each $i\in\{1,\ldots,h\}$. Theorem 9.14 affirms that $\widehat{L(x)}_i$ is isomorphic to  $\widehat{L(\Phi)}[T]/f_i(T)$ and the local degree is the degree of $f_i$ that we denote by $n_i$. Note that we are in the case where the ramification index is one. Recall that the residue degree and the ramification index do not change passing to the completion. Thus we may compute the residue degree by considering the reduced fields. Since $w_i$ is the Gauss valuation with respect $y_i$ we have to write the elements in $L(x)$ as rational function in the variable $y_i$, for example as did before by rewriting $\Phi(x)$ as $\Phi_i(y_i)$.  The residue field of $\widehat{L(x)}_i$ is $L_0(\overline{y}_i)$ and the residue filed of $\widehat{L(\Phi)}$ is $L_0(\Phi_{i,v})$. Therefore the residue degree is equal to $\deg(\Phi_{i,v})$. 
By Proposition 9.3 in \cite{Jacobson} we have $n_i=\deg \Phi_{i,v}$. 
}
\end{Rem}

\section{Special integral form for a morphism}\label{intF}
Let $\Phi$ be an endomorphism of $\SR$ of degree $n\geq 2$ defined over $K$ verifying condition \ref{crv}. We assume that $p$ does not divide the order of the mondromy group or that there exist two distinct ramification values $\lambda$ and $\mu$ such that $p$ does not divide any nonzero  integer of the form $\left(\sum_{P\in A}e_\Phi(P)-\sum_{P\in B}e_\Phi(P)\right)$, where $A\subset \Phi^{-1}(\lambda), B\subset \Phi^{-1}(\mu)$. Let $n$ be the degree of $\Phi$. 
Our aim is to apply the Riemann-Hurwitz formula to the map $\Phi$ and the reduced maps $\Phi_{i,v}$, defined as in Remark \ref{resdeg}. We will show that it is possible to choose a particular model for $\Phi(x)=F(x)/G(x)$ such that the point at infinity is a ramification point in the fiber of the branch value 1, each other point in a ramification fiber is a  $v$--integers and the reduced modulo $v$ map $\Phi_v=\overline{F}/\overline{G}$ is not in $K_0(x^p)$, i.e. $\Phi_v$ is separable. Note that the existence of a model for $\Phi$ whose points in the ramification fibers are all $v$--integers is trivial; it is enough to take the polynomials $\alpha^n F(\alpha^{-1}x)$ and $\alpha^n G(\alpha^{-1}x)$  instead of $F(x)$ and $G(x)$ for an $\alpha\in K$ such that $v(\alpha)$ is big enough. The difficult part is to prove the existence of such an $\alpha$ for which the condition on separability holds. The fact that the points in the ramification fibers of $\Phi$ are $v$--integers will imply that the polynomials that we will take in consideration are monic with $v$--integral coefficients that is important because we will work with their reduction modulo $v$. But there is also another reason: The aim is to prove that the reduction modulo $v$ of the map $\Phi$, written as in the above form, has simple good reduction. Hence by  \cite[Theorem 1.6]{CanciPT} we know that  it has critically good reduction at $v$. Therefore, by \cite[Lemma 2.6]{CanciPT}, the condition $\Phi_v$ separable will imply that each point in the ramification fibers of $\Phi$ is a $v$--integer, because the point at infinity is a ramification point; furthermore the separability will imply also that there is only tame ramification. 
Note that an argument as the one contained in the proof of \cite[Lemma 3.6]{Uz3} proves that $p$ does not divide any $n_i$, thus the reduced field extension associated to any valuation $w_i$ is separable.

This section contains  the adaptations to our setting of Zannier's ideas contained in the proof in section 5 of \cite{Uz3}.  For the reader's convenience we present in a short way those ideas but we omit some details. 

We consider $K$ enlarged so that it contains all points of the ramification fibers of $\Phi$. Furthermore, we can assume that $K$ is so enlarged such that it has the same properties of the field $L$ described in the statement of Lemma \ref{e3.1}. As already remarked by Zannier in \cite[Section 2]{Uz3}, in testing good reduction we can replace $K$ with its completion. 
Let us denote by $\PGL_2(\mco)$ the subgroup of $\PGL_2(K)$ of the automorphisms associated to a matrix in GL$_2(\mco)$, that is the group of invertible matrices with coefficients in $\mco$, whose inverse still has coefficients in $\mco$. Recall that the action of $\PGL_2(\mco)$ is transitive on the set of triple of points in $\SR$ that remain (pairwise) distinct after reduction modulo $v$. Therefore, by condition (\ref{crv}), we can suppose that $\lambda$ and $\mu$ in the hypothesis of Theorem \ref{mainT} are equal to  $0$ and $\infty$, respectively. Indeed it is enough to consider $A\circ\Phi$ for a suitable $A\in\PGL_2(\mco)$. If $\mcr_\Phi$ has only two elements, then Theorem \ref{mainT} is trivially true, because $\Phi$ would be equivalent to the map $x^n$ that has bad reduction if and only if $p$ divides $n$. 
Therefore we may assume that the branch locus $\Phi(\mcr_\Phi)$ contains at least the three points $\{0,\infty,1\}$. Note that by our assumption on $\Phi$, any other branch point is a $v$--unit.  This fact will be useful in the proof of a lemma that is the generalization of \cite[Lemma 5.2]{Uz3}. Furthermore, without loss of generality, we can assume that $\infty\mapsto 1$ and $\infty$ is a ramification point (it is sufficient to take an element $A\in \PGL_2(\mco)$ that sends $\infty$ to a ramification point in the fiber of 1 and consider the map $\Phi\circ A$). In this way we have that $\deg(F)=\deg(G)$ and the leading coefficients of $F$ and $G$ are equal. We are going to  apply \cite[Proposition 4.1]{Uz3} that is an improvement of a result by Dwork and Robba about $p$--adic analytic continuations of Puiseux series (see \cite{DworkRobba}). As remarked before the field extensions associated to the valuations $w_i$'s are separable. This is important for the application of \cite[Proposition 4.1]{Uz3}.

Up to a translation of $x$ we may assume that one of the root of $F$ and $G$ is $0$. Let $\alpha\in K$ be such that 
$$v(\alpha)=-\min\left\{v(\beta)\mid \beta \in F^{-1}(0)\cup G^{-1}(0)\right\},$$
where we assume $v(\infty)=+\infty$.
By replacing $F(x), G(x)$ with $\alpha^n F(x/\alpha), \alpha^n G(x/\alpha)$ respectively, we may assume that the roots of $F,G$ are $v$--integers and at least one is a $v$--unit. In particular $F,G$ are monic in $\mco[x]$. Since $0\in F^{-1}(0)\cup G^{-1}(0)$,  we have that not all roots of $F$ and $G$ reduces modulo $v$ to the same point in $K_0$. This assumption will be crucial in the last part of this section.

We will see that this new model of $\Phi$ has the property that the fiber of a branch value contains only points in $\mco$ except the fiber over $1$ that contains $\infty$ and $v$--integers. In order to prove this, for an arbitrary branch value $\lambda$, different from $0$ and $\infty$, we consider 
the polynomial
\beq\label{eqlambda}
F(t)-\lambda G(t)=c_\lambda H_\lambda(t),\eeq
where $c_\lambda\in \mco$, $H_\lambda\in \mco[x]$ and at least one coefficient of $H_\lambda$ is a $v$--unit. 
As in \cite{Uz3} we are going to apply \cite[Proposition 4.1]{Uz3} to the polynomial 
$$f_\lambda(X,\Phi)\coloneqq \Phi F(X)-c_\lambda H_\lambda(X).$$

The assumptions concerning the field extensions in \cite[Proposition 4.1]{Uz3} are verified also for the polynomial $f_\lambda(X,\Phi)$, because $f_\lambda(X,\Phi)=(\Phi-1)F(X)+\lambda G(X)$. Therefore, we are considering the map $\lambda/(1-\Phi)$ instead of $\Phi$. That is the same of taking $A\circ \Phi$, with $A\in \PGL_2(K)$ given by the matrix 
$$\begin{pmatrix}0 &\lambda \\-1&1\end{pmatrix},$$
which has determinant equal to $\lambda$, that is a $v$--unit. 

In order to apply \cite[Proposition 4.1]{Uz3} it remains to prove that $f_\lambda(X,\Phi)$ does not have multiple roots in $X$ at  $z_0$, for each non zero element $z_0$ with $v(z_0)<1$. 
\begin{Lemma}
Let $z_0$ in $K^*$ such that the polynomial $f_\lambda(X,z_0)$ in the variable $X$ has a multiple root; then $v(z_0)=1$. 
\end{Lemma}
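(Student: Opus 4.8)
The statement says: if $f_\lambda(X,z_0)$ has a multiple root in $X$, then $v(z_0)=1$. Recall $f_\lambda(X,\Phi)=\Phi F(X)-c_\lambda H_\lambda(X)$, and by construction $c_\lambda H_\lambda(X)=F(X)-\lambda G(X)$, so $f_\lambda(X,\Phi)=(\Phi-1)F(X)+\lambda G(X)$. Plugging in $\Phi=z_0$: $f_\lambda(X,z_0)=(z_0-1)F(X)+\lambda G(X)$. A multiple root of this polynomial at a point $X=\beta$ means $f_\lambda(\beta,z_0)=0$ and $\partial_X f_\lambda(\beta,z_0)=0$, i.e. $(z_0-1)F'(\beta)+\lambda G'(\beta)=0$ as well. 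The first plan is to eliminate: from the two linear equations in $(z_0-1)$ and $\lambda$ one gets that $\beta$ is a common root of $F(X)G'(X)-F'(X)G(X)$, which is exactly the numerator of the derivative of $\Phi=F/G$ (the Wronskian), hence $\beta$ is a ramification point of $\Phi$ (or $\beta$ is a common root of $F$ and $G$, excluded since they are coprime). So $\beta\in\mcr_\Phi$, and $\Phi(\beta)=F(\beta)/G(\beta)$ is a branch value.

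**Locating $z_0$.** From $(z_0-1)F(\beta)+\lambda G(\beta)=0$ we read off $z_0=1-\lambda G(\beta)/F(\beta)=1-\lambda/\Phi(\beta)$. Equivalently, $z_0$ is the image of the branch value $\Phi(\beta)$ under the automorphism $A=\left(\begin{smallmatrix}0&\lambda\\-1&1\end{smallmatrix}\right)$ whose inverse sends $\Phi(\beta)\mapsto z_0$; more precisely $z_0=1-\lambda/\Phi(\beta)$, so $z_0=0$ iff $\Phi(\beta)=\lambda$. Now I use the normalizations set up in the excerpt: the branch locus of $\Phi$ contains $\{0,\infty,1\}$, $\lambda$ itself is a branch value different from $0,\infty$ hence a $v$-unit, and every branch value other than $0,\infty,1$ is a $v$-unit as well. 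I must show $v(z_0)=1$, i.e. that $z_0$ is not a unit, not zero (wait — need to double check: the statement is $v(z_0)=1$ for ANY multiple-root $z_0$; but actually the intended use is in the hypothesis "for each nonzero $z_0$ with $v(z_0)<1$ there is no multiple root", so the lemma must rule out $v(z_0)\le 0$ and $v(z_0)\ge 1$ except exactly $v(z_0)=1$; more carefully, we must show $v(z_0)\ge 1$ and $v(z_0)\le 1$). Let me reconsider: probably the real content is only $v(z_0)=1$ holds, i.e. $z_0$ reduces to $0$ but $z_0/a$ is a unit, where $a$ uniformizes.

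**Case analysis on $\Phi(\beta)$.** Since $\beta\in\mcr_\Phi$, $\Phi(\beta)$ is a branch value, so $\Phi(\beta)\in\{0,\infty,1\}$ or $\Phi(\beta)$ is a $v$-unit. I go through the cases for $z_0=1-\lambda/\Phi(\beta)$:
(i) $\Phi(\beta)=\infty$: then $z_0=1$, so $v(z_0)=0\ne 1$ — this case must be excluded. It is excluded because $\Phi(\beta)=\infty$ means $\beta\in G^{-1}(0)$, and then the second equation forces $(z_0-1)F'(\beta)=0$; but $G(\beta)=0$ and $F,G$ coprime give $F(\beta)\ne 0$, and if also $G'(\beta)=0$ then $\beta$ is a double root of $G$... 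I need to recheck whether $\beta=\infty$ is possible, or whether the hypothesis $z_0\in K^*$ together with the setup (infinity maps to $1$, is a ramification point) handles it. Actually the cleaner route: $z_0\in K^*$ and $f_\lambda(X,z_0)$ has a multiple root; if $\Phi(\beta)=\infty$ the equation $(z_0-1)F(\beta)+\lambda G(\beta)=0$ with $G(\beta)=0$, $F(\beta)\ne0$ forces $z_0=1$, and then the leading-coefficient comparison $\deg F=\deg G$ with equal leading coefficients means $f_\lambda(X,1)=\lambda G(X)$ has the same roots as $G$, all simple since $\infty$ is the only ramification point over $1$ — contradiction unless... this needs care. (ii) $\Phi(\beta)=1$: then $z_0=1-\lambda$, and since $\lambda$ is a $v$-unit, $v(1-\lambda)$ could be anything $\ge0$; hmm, but $\lambda\ne 1$? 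Actually we need $\lambda$ to reduce to something $\ne 1$ mod $v$, which follows from condition \eqref{crv} applied to the branch values $1$ and $\lambda$: $\lambda_v\ne 1$, so $v(1-\lambda)=0\ne 1$ — exclude this too? That can't be right either.

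Let me reconsider — I think I have the direction of $A$ backwards, and the correct relation is that $z_0$ is the image under $A^{-1}$, making $z_0=\lambda/(1-\Phi(\beta))$ or similar; in any case, the genuine plan is: (1) show a multiple root $\beta$ forces $\beta\in\mcr_\Phi$ via the Wronskian $FG'-F'G$; (2) solve for $z_0$ in terms of the branch value $\Phi(\beta)$; (3) use that the branch values other than $0,\infty$ are $v$-units together with condition \eqref{crv} (distinct branch values stay distinct mod $v$) to pin down that $z_0$ necessarily lands in the specific residue class corresponding to the branch value $\lambda$ colliding with something, forcing $v(z_0)=1$ exactly. The main obstacle I foresee is the bookkeeping in step (3): tracking how the normalization "$\infty\mapsto 1$ ramified, roots of $F,G$ are $v$-integers not all in one residue class" interacts with the Möbius change $\lambda/(1-\Phi)$ to guarantee $z_0$ reduces to $0$ without being $0$. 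Concretely, the hard part is proving $v(z_0)\ge1$ (equivalently $\overline{z_0}=0$), which should come from: $z_0$ is the $A$-image of a branch value, distinct branch values stay distinct mod $v$ by \eqref{crv}, and the only branch value whose $A$-image is $0$ is $\lambda$ — so if $\overline{z_0}\ne0$ then $f_\lambda(X,z_0)$ having a multiple root would contradict the good-reduction-type separability already established for the reduced maps $\Phi_{i,v}$ via \cite[Lemma 3.6]{Uz3} (no wild ramification, $p\nmid n_i$); and proving $v(z_0)\le1$ should be the genuinely delicate estimate, bounding $v(z_0-0)=v(1-\lambda/\Phi(\beta))$ from above using that $\beta$ is a $v$-integer and $\Phi(\beta)$ a unit, so that $\lambda/\Phi(\beta)$ is a unit, and the reduction $1-\overline{\lambda}/\overline{\Phi(\beta)}=0$ forces $\overline{\Phi(\beta)}=\overline{\lambda}$, which by \eqref{crv} for the branch values forces $\Phi(\beta)=\lambda$ exactly — whence $z_0=1-1=0$?? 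That gives $z_0=0$, contradicting $z_0\in K^*$. So in fact the lemma should read: a multiple root with $z_0\ne 0$ forces $v(z_0)=1$ to fail, i.e., there is NO such $z_0$ with $v(z_0)<1$ other than... I will structure the proof as: assume $z_0\in K^*$ gives a multiple root; derive $\beta\in\mcr_\Phi$; if $v(z_0)<1$ then $\overline{z_0}$ is a unit or $\ne0$, pull this back through $A$ to conclude $\overline{\Phi(\beta)}$ is a branch value of $\Phi_v$ at which $\Phi_v$ ramifies with the collision forbidden by \eqref{crv}, contradiction; hence $v(z_0)\ge 1$, and the case $v(z_0)>1$ is likewise excluded by a reduction argument showing $\overline{\Phi(\beta)}=\overline\lambda$ would then force $\Phi(\beta)=\lambda$ and $z_0=0$. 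Therefore $v(z_0)=1$.
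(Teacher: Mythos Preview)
Your core approach is correct and matches the paper's: a multiple root $\beta$ of $f_\lambda(X,z_0)=(z_0-1)F(X)+\lambda G(X)$ forces the Wronskian $FG'-F'G$ to vanish at $\beta$, so $\beta\in\mcr_\Phi$; then $z_0=1-\lambda/\Phi(\beta)$ with $\mu\coloneqq\Phi(\beta)\in\Phi(\mcr_\Phi)$; finally one uses condition~(\ref{crv}) on the branch locus. The paper does exactly this, phrased slightly differently (it names the composed map $\Psi_\lambda$ and observes $z_0$ is one of its branch values).

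The gap in your write-up is entirely a misreading of the conclusion. In this lemma and the sentence preceding it, the symbol $v$ is being used in \emph{multiplicative} (absolute-value) notation, inherited from Zannier's paper and from the application of \cite[Proposition 4.1]{Uz3}: ``$v(z_0)=1$'' means $|z_0|_v=1$, i.e.\ $z_0\in\mco^*$ is a $v$-unit. This is inconsistent with the additive convention fixed in Section~\ref{nd}, but it is unambiguous once you read the paper's own proof, which concludes ``$\lambda/(1-\mu)$ is a $v$-unit. So $v(z_0)=1$.'' You interpreted $v(z_0)=1$ additively (as ``$z_0$ has valuation exactly one''), and this sent you chasing the impossible task of showing $z_0$ is a uniformizer, producing the contradictions you noticed.

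With the correct reading your case analysis finishes in two lines. Writing $z_0=1-\lambda/\mu$ with $\mu\in\Phi(\mcr_\Phi)$: if $\mu=0$ then $z_0=\infty$, and if $\mu=\lambda$ then $z_0=0$, both excluded by $z_0\in K^*$; if $\mu=\infty$ then $z_0=1\in\mco^*$; otherwise $\mu\neq 0,\infty$ is a $v$-unit (by the normalization of the branch locus), and since $\mu\neq\lambda$ are distinct branch values, condition~(\ref{crv}) gives $\bar\mu\neq\bar\lambda$, hence $\lambda/\mu$ is a $v$-unit not reducing to $1$, so $z_0=1-\lambda/\mu\in\mco^*$. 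There is nothing more to do; the elaborate reduction arguments you sketched in your last paragraph are unnecessary.
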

\begin{proof}Let $x_0$ be a multiple root of $f_\lambda(X,z_0)=z_0F(X)-H_\lambda(X)$. Since $F(X)$ and $H_\lambda(X)$ are coprime, we have that $F(x_0)\neq 0$. Then $z_0=H_\lambda(x_0)/F(x_0)$ and $z_0$ is a branch point of the map $\Psi_\lambda(x)=\lambda/(1-\Phi(x))$. Therefore we have that 
$$z_0\in \Psi_\lambda(R_{\Psi_\lambda})=\left\{\frac{\lambda}{1-\mu}\mid \mu\in \Phi(R_\Phi)\right\}.$$
Recall that 0 and 1 are branch values of $\Phi$ and that two different branch values have different reduction modulo $v$. Then $1-\mu$ is a $v$--unit for each  $\mu\in \Phi(R_\Phi)$, which implies that $\frac{\lambda}{1-\mu}$ is a $v$--unit. So $v(z_0)=1$.
\end{proof}
We consider the above polynomial with $\lambda=1$ and we apply below a slightly modification of the Zannier techniques in order to prove that $v(c_1)=0$. 

We consider the Puiseux expansions $\theta(z)$ of the algebraic function solutions of $f_1(\theta,z)$ around $z=0$. The point $\infty$ is in the fiber of $1$; denoting by $e$ the ramification degree of $\infty$ we have the same first family as defined in \cite[p. 107]{Uz3}:
$$\theta_i(z)=a_{-1}\zeta_e^iz^{-1/e}+a_0+a_1\zeta_e^{-i}z^{1/e}+\ldots , \quad i=0,1,\ldots, e-1$$
where $\zeta_e$ is a primitive $e$--th root of 1 and where $a_{-1}^e$ is the leading coefficient of $H_1$.

Furthermore, for any other root $b$ of $H_1$ with ramification degree $e_b$,
we have the family of solutions 
$$\theta_{b,i}(z)=b+b_1\zeta_{e_b} z^{1/e_b}+\ldots, \quad i=0,1,\ldots, e_b-1$$
where $\zeta_{e_b}$ is a primitive $e_b$--th root of the unity. Therefore all coefficients in the series $\theta_i$'s and $\theta_{b,i}$ are contained in a finite extension $L$ of $K$.
With the same arguments used in \cite[p. 108]{Uz3}, that use \cite[Prop. 4.1]{Uz3} and \cite[Prop .1.1, p.115]{DworkGS}, one proves that the above series $\theta_i$ and $\theta_{b,i}$ have $v$--integral coefficients. In particular this proves that the roots of $H$ are $v$--integers. 

We consider the factorization in $L[[z]]$ of the polynomial $z^mF(X)-H(X)$ , where $m$ is the lowest common multiple of $e$ and the $e_b$'s, exactly as done in \cite[from p. 108]{Uz3} with $e$ instead of $m$
\beq\label{facti}z^mF(X)-H(X)=\prod_{i=1}^e(z^{m/e}X-z^{m/e}\theta_i(z^m))\prod_{b\in H^{-1}(0)}\left(\prod_{i=1}^{e_b}(X-\theta_{i,b}(z^m)\right).\eeq
 
The proof goes on exactly as in \cite{Uz3} where if we assume that $v(c_1)>0$ we obtain that all the roots of $F,G,H$ must be congruent to a given one of them, that contradicts our assumption on $F$ and $G$. Therefore we have $v(c_1)=0$ and as explained in \cite[p. 109]{Uz3}, we deduce that the reductions $\overline{F}$ and $\overline{G}$ are linearly independent over $K(x^p)$, thus $\Phi_v=\overline{F}/\overline{G}$ is separable.

With the same above arguments given before, by considering a generic ramification value $\lambda$, we prove that each point in the fiber of $\lambda$ is $v$--integral.

\section{Proof of Theorem \ref{mainT}}\label{pmt}
Let $\Phi$ be an endomorphism of $\SR$ of degree $n\geq 2$ defined over $K$ verifying the properties as described in Section \ref{intF}. Therefore we are assuming that $p$ does not divide the order of the mondromy group or does not divide any nonzero  integer of the form $\left(\sum_{P\in A}e_\Phi(P)-\sum_{P\in B}e_\Phi(P)\right)$, where $A\subset \Phi^{-1}(0)$ and $ B\subset \Phi^{-1}(\infty)$. As remarked in the previous section, this assumption on $p$ implies that each residue field extension $K_0(\overline{y}_i)/K(\Phi_{i,v}(\overline{y}_i))$ is separable. This will be useful because we are going  to apply the Riemann--Hurwitz formula to each cover associated to the extension $K_0(\overline{y}_i)/K(\Phi_{i,v}(\overline{y}_i))$. 

Let the field $L$ and the extensions $w_i$ as defined in Lemma \ref{e3.1}. As did in Section \ref{intF}, we assume that $K$ is so enlarged so that $K=L$. 

By Lemma \ref{e3.1} we have that the valuation $w_i$'s over $K(x)$ are unramified as extensions of the Gauss valuation over $K(\Phi)$. Indeed, $w_i$ is the Gauss norm on $K(y_i)$ with respect $y_i$. Moreover the residue fields are regular over $K_0$. Therefore in order to prove that $\Phi$ has good reduction at $v$ it is enough to prove that $h=1$. Indeed by the arguments in Section \ref{intF} we have that  the extension of the residue fields is separable (see the characterization of good reduction given in Remark \ref{eqgr}).

As already seen in Remark \ref{resdeg}, for each $i\in \{1,\ldots,h\}$, the residue degree associated to $w_i$ is the degree of the reduced map $\Phi_{i,v}$ and it is equal to the local degree $n_i$. Since the extension are unramified, Theorem 9.15 in \cite{Jacobson} implies
\beq\label{degsum}\sum_{i=1}^{h}n_i=n.\eeq

The next statement represents a technical lemma, whose content is not so deep but it is useful to fix some notations that we will use in the rest of the proof.

\begin{Lemma}\label{Ord}
We use the same notation as in Lemma \ref{e3.1}. Let $i,j\in\{1,\ldots,h\}$. Let $a,m,m_i,m_j,c_i,c_j$ be as in Lemma \ref{e3.1}, that is $a^mx=a^{m_i}y_i+c_i=a^{m_j}y_j+c_j$. Suppose $m_i\leq m_j$. Then the following are equivalent:
\begin{itemize}
\item[i)]  there exists $\alpha\in\SR(K)$ such that $v(c_i-\alpha)\geq m_i$ and $v(c_j-\alpha)\geq m_j$;
\item[ii)] there exist an integer $n_{i,j}>0$ and an element $s_{i,j}\in \mco$ such that $y_i=a^{n_{i,j}}y_j+s_{i,j}$;
\item[iii)] there exists $\gamma\in\Phi^{-1}(\{0,\infty\})$ such that $v(c_i-a^m\gamma)\geq m_i$ and $v(c_j-a^m\gamma)\geq m_j$;
\end{itemize}
Furthermore, if $m_i=m_j$ and i), ii) and iii) hold, then $i=j$.
\end{Lemma}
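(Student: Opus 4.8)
The plan is to prove the cycle of equivalences $i)\Leftrightarrow ii)\Leftrightarrow iii)$ together with the collapse $m_i=m_j\Rightarrow i=j$ by directly unwinding the relations $a^m x=a^{m_i}y_i+c_i=a^{m_j}y_j+c_j$ from Lemma \ref{e3.1}, and by using that $w_i$ (resp.\ $w_j$) is the Gauss valuation on $K(y_i)$ (resp.\ $K(y_j)$). Throughout I will use freely the fact that if $w$ is the Gauss valuation with respect to a variable $t$, then for $\beta\in\mco$ the reduction $\overline{t-\beta}$ is a nonconstant polynomial in $\overline t$, so that $w$ does \emph{not} have a centre at any point of $\SR(K_0)$, while a linear change of variable $t\mapsto a^k t'+s$ with $k>0$ and $s\in\mco$ shows $K(t)=K(t')$ and gives the Gauss valuation with respect to $t'$ precisely when such a shift is available.

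\textbf{Proof of the equivalences.}
First I would do $ii)\Rightarrow i)$ and $ii)\Rightarrow iii)$, which are the easy directions: if $y_i=a^{n_{i,j}}y_j+s_{i,j}$ with $n_{i,j}>0$, $s_{i,j}\in\mco$, substitute into $a^m x=a^{m_i}y_i+c_i$ to get $a^m x=a^{m_i+n_{i,j}}y_j+(a^{m_i}s_{i,j}+c_i)$; comparing with $a^m x=a^{m_j}y_j+c_j$ and using that $y_j$ is transcendental over $K$ forces $m_j=m_i+n_{i,j}$ and $c_j=a^{m_i}s_{i,j}+c_i$, hence $v(c_j-c_i)\geq m_i$. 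Then $\alpha:=c_i$ works for $i)$ (trivially $v(c_i-\alpha)=+\infty\geq m_i$ and $v(c_j-c_i)\geq m_i$; but we need $\geq m_j$ — so more carefully take $\alpha$ so that $a^m\alpha$-type normalization works, see below). For $iii)$, since $0,\infty\in\Phi^{-1}(\{0,\infty\})$ and these reduce to $0$ and $\infty$ in $K_0$, and by the integrality of the ramification fibres established in Section \ref{intF} every $\gamma\in\Phi^{-1}(\{0,\infty\})$ is a $v$-integer (or $\infty$), one checks that the condition $v(c_i-a^m\gamma)\geq m_i$ and $v(c_j-a^m\gamma)\geq m_j$ is exactly the statement that the centres of $w_i$ and $w_j$, viewed on $K(x)$, both "see" the point $\gamma$; concretely $\gamma=0$ will work once we know $v(c_i)\geq m_i$ is forced, and in general the relation $c_j-c_i\equiv 0\pmod{a^{m_i}}$ lets us match the two conditions. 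The genuinely substantive implication is $i)\Rightarrow ii)$: given $\alpha\in\SR(K)$ with $v(c_i-\alpha)\geq m_i$, $v(c_j-\alpha)\geq m_j$, set $y_i'=y_i+(c_i-\alpha)a^{-m_i}$ (legitimate since the shift is in $\mco$), so that $a^m x=a^{m_i}y_i'+\alpha$ and likewise $a^m x=a^{m_j}y_j'+\alpha$ with $y_j'=y_j+(c_j-\alpha)a^{-m_j}\in\mco$-shift of $y_j$; subtracting gives $a^{m_i}y_i'=a^{m_j}y_j'$, i.e.\ $y_i'=a^{m_j-m_i}y_j'$, and unwinding the shifts yields $y_i=a^{m_j-m_i}y_j+s_{i,j}$ with $s_{i,j}\in\mco$, which is $ii)$ with $n_{i,j}=m_j-m_i\geq 0$ — and if $m_i=m_j$ this reads $y_i=y_j+s$, forcing the two Gauss valuations $w_i=w_j$, hence (the $w_i$ being \emph{distinct} by hypothesis) $i=j$, which also disposes of the "furthermore" clause and shows $n_{i,j}>0$ whenever $i\neq j$.

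\textbf{Main obstacle.}
The delicate point is establishing $n_{i,j}>0$ strictly (not merely $\geq 0$) in direction $i)\Rightarrow ii)$, i.e.\ ruling out $m_i=m_j$ with $i\neq j$: this is precisely where one must invoke that $w_1,\dots,w_h$ are pairwise distinct valuations on $K(x)$, so that a relation $y_i=y_j+s$ with $s\in\mco$ (which makes the Gauss valuations w.r.t.\ $y_i$ and $y_j$ literally coincide) is impossible unless $i=j$; once this is in hand the "furthermore" statement is immediate and the strict inequality in $ii)$ follows. A secondary bookkeeping nuisance is being careful with the point at infinity in $iii)$ (the case $\gamma=\infty$, where $a^m\gamma=\infty$ and the valuation conditions must be read projectively), and with whether $\alpha$ in $i)$ is finite or $\infty$; in both cases the argument goes through after clearing denominators, but it should be spelled out. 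Modulo these points, the proof is a direct computation with the normalizations of Lemma \ref{e3.1}.
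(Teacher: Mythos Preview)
Your argument for $i)\Rightarrow ii)$ is essentially the paper's, and your treatment of the ``furthermore'' clause (that $m_i=m_j$ forces $y_i=y_j+s$ with $s\in\mco$, hence $w_i=w_j$, hence $i=j$) is correct and matches the paper exactly.

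The genuine gap is in $ii)\Rightarrow iii)$. You write that ``concretely $\gamma=0$ will work once we know $v(c_i)\geq m_i$ is forced'', but this is precisely what is \emph{not} forced: by the normalization in Lemma~\ref{e3.1}, either $c_j=0$ or $v(c_j)<m_j$, so $\gamma=0$ works only in the degenerate case $c_j=0$. The actual content of $ii)\Rightarrow iii)$---which your sketch does not supply---is the following. Write $\Phi(x)=\prod_l(x-\alpha_l)/\prod_k(x-\beta_k)$ with all $\alpha_l,\beta_k\in\mco$ (this uses the special integral form of Section~\ref{intF}). Substituting $a^m x=a^{m_j}y_j+c_j$ gives
\[
\Phi_j(y_j)=\frac{\prod_l(a^{m_j}y_j+c_j-a^m\alpha_l)}{\prod_k(a^{m_j}y_j+c_j-a^m\beta_k)}.
\]
Now the crucial point: $w_j$ restricts to the Gauss valuation on $K(\Phi)$, so the reduction $\Phi_{j,v}(\bar y_j)$ is transcendental over $K_0$. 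But any factor $a^{m_j}y_j+c_j-a^m\gamma$ with $v(c_j-a^m\gamma)<m_j$ reduces, after normalization, to a constant in $K_0$. Hence if \emph{no} $\gamma\in\Phi^{-1}(\{0,\infty\})$ satisfied $v(c_j-a^m\gamma)\geq m_j$, the whole expression would reduce to a constant, contradicting transcendence. This produces the required $\gamma$ for $j$; the condition for $i$ then follows from $c_j=a^{m_i}s_{i,j}+c_i$ and $m_i\leq m_j$ by the ultrametric inequality. Your ``centres'' heuristic gestures at this, but the proof hinges on the transcendence of $\Phi_{j,v}$, which you never invoke.

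A minor point: your direct attempt at $ii)\Rightarrow i)$ via $\alpha=c_i$ indeed fails for the reason you notice, but $\alpha=c_j$ works immediately ($v(c_j-c_j)=\infty$ and $v(c_i-c_j)\geq m_i$). The paper avoids this by proving the cycle $i)\Rightarrow ii)\Rightarrow iii)\Rightarrow i)$, where $iii)\Rightarrow i)$ is trivial with $\alpha=a^m\gamma$.
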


\begin{proof} At first we prove {\it i)} $\Rightarrow$ {\it ii)}. The condition {\it i)} implies that $v(c_j-c_i)\geq m_i$. We have
$$y_i=a^{m_j-m_i}y_j+\dfrac{c_j-c_i}{a^{m_i}}.$$
Therefore it is sufficient to take $n_{i,j}=m_j-m_i$ and $s_{i,j}=(c_j-c_i)/a^{m_i}$.

Now we prove that $ii) \Rightarrow iii)$. Because of our assumption on $\Phi$, we have that the map $\Phi$ has the shape
$$\Phi(x)=\dfrac{\prod_{l=1}^n(x-\alpha_l)}{\prod_{k=1}^n(x-\beta_k)}$$
where $\alpha_l, \beta_k\in \mco$ for any indexes $l$ and $k$ (where in the numerator or in the denominator we could have some repeated factors).

The reduction of the $v$--normal form with respect the variable $y_i$ of the map 

$$\Phi(x)=\Phi(a^{m_j}y_j+c_j)= \dfrac{\prod_{l=1}^n(a^{m_j}y_j+c_j-a^m\alpha_l)}{\prod_{k=1}^n(a^{m_j}y_j+c_j-a^m\beta_k)}$$
must be transcendental over $K_0$, because $w_j$ is an extension of the Gauss norm of $K(\Phi)$. Note that if $v(c_j-a^m\alpha)=r<m_j$, then $a^{m_j}y_j+c_j-a^m\alpha=a^r(a^{m_j-r}y_j+(c_j-a^m\alpha)a^{-r})$, where $(a^{m_j-r}y_j+(c_j-a^m\alpha)a^{-r})\in\mco[y_j]$ and its reduction is in $K_0$.  Then there exists $\gamma\in\{\alpha_1,\ldots,\alpha_{n},\beta_1,\ldots,\beta_{n}\}$ such that $v(c_j-a^m\gamma)\geq m_j$. From condition ii) we deduce $c_j=a^{m_i}s_{i,j}+c_i$. Therefore the above argument prove that $v(a^{m_i}s_{i,j}+c_i-a^m\gamma)\geq m_j$ implying $v(c_i-a^m\gamma)\geq m_i$, because of our assumption $m_i\leq m_j$.

The implication $iii) \Rightarrow i)$ is completely trivial.

Now suppose that $m_i=m_j$. If {\it ii)} holds, then $n_{i,j}=0$. Hence $y_i=y_j+s_{i,j}$ and this is absurd because of the characterization of the Gauss norm, indeed if $y_i=y_j+s_{i,j}$, then $w_i$ and $w_j$ would be two different extensions of $v$ to $K(y_i)$ whose reduction of $y_i$ is transcendent.
\end{proof}

Lemma \ref{Ord} allows us to define a partial order $\leq$ on the set $\{w_1,\ldots, w_h\}$. Let $y_i$, $m_i$ and $c_i$ be defined as in Lemma \ref{e3.1} for all indexes $i$. For all $i, j$ we say that $w_i\leq w_j$ (or equivalently $y_i\leq y_j$) if $m_i\leq m_j$ and condition i), or the equivalent conditions ii) and iii), of Lemma \ref{Ord} holds. Actually condition ii) implies easily that $\leq$ is transitive and the case $m_i=m_j$ of Lemma \ref{Ord} assures that $\leq$ is antisymmetric; the reflexivity is trivially true. Therefore we can define a directed graph by using the above partial order in the canonical way. As usual we say that $w_j$ is a successor of $w_i$ if $w_i<w_j$ and there exists no $w_k$ with $w_i<w_k<w_j$. In the graph notation we say that the ordered pair $(w_i,w_j)$ is a directed edge or arrow of the graph.

Next lemma affirms that the above order admits a minimum. In graph theory notation we say that the graph is an arborescence. 
\begin{Lemma}\label{min}
Let $\Phi$ and $K$ be assumed as at the beginning of the present section.  Let $w_1,\ldots, w_h$ be the extensions over $K(x)$  of the Gauss valuation over $K(\Phi)$. The order described as above admit a minimum, i.e. there exists $i_0\in \{1,\ldots,h\}$ such that 
$w_{i_0}\leq w_i$ for all  $i\in \{1,\ldots,h\}$. Furthermore $w_{i_0}$ is the Gauss valuation with respect to $x$ and more precisely $y_{i_0}=x$.
\end{Lemma}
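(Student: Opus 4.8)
The plan is to show that the partial order $\leq$ on $\{w_1,\dots,w_h\}$ has a least element, and to identify it with the Gauss valuation with respect to $x$ itself. First I would observe that there is an obvious candidate for the minimum. The valuation $w$ on $K(x)$ defined as the Gauss valuation with respect to $x$ and $v$ does restrict to the Gauss valuation on $K(\Phi)$: this is exactly because $\Phi(x)=F(x)/G(x)$ is written in the $v$--normalized form produced in Section \ref{intF}, with $F,G$ monic with $v$--integral coefficients and not all roots congruent, so $\Phi$ has $w$--integral coefficients and its reduction $\Phi_v$ is nonconstant, hence $\bar x$ maps to a transcendental element over $K_0$ in the residue field of $K(\Phi)$ under the induced valuation, i.e. the induced valuation on $K(\Phi)$ is the Gauss one. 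Therefore $w$ is one of the $w_i$'s; call it $w_{i_0}$, and by construction $y_{i_0}=x$ up to the normalization in \eqref{yi}, i.e. we may take $m_{i_0}=m$ and $c_{i_0}=0$ (note $w(x)\ge 0$ since all roots of $F,G$ are $v$--integral, so the index $m$ from Lemma \ref{e3.1} is $0$, or in any case $a^m x = a^{m_{i_0}} y_{i_0}$ with $y_{i_0}=x$).

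Next I would verify that this $w_{i_0}$ is below every $w_j$ in the order. Fix $j$. Since $m_{i_0}=m$ we have $m_{i_0}\le m_j$ automatically (as $m_j\le m$ by the definition $m=\max\{-\min_i w_i(x),0\}$ would need checking — more robustly, $m$ is chosen in Lemma \ref{e3.1} as an upper bound so that $a^m x$ is $w_i$--integral for all $i$, forcing $m_i\le m$ for every $i$; combined with the normalization $y_{i_0}=x$, i.e. $m_{i_0}=m$, we get $m_j\le m_{i_0}$, so in fact $i_0$ is maximal for the $m$-value, not minimal — I must be careful here and instead take $y_{i_0}$ with the genuinely smallest $m_{i_0}$, which should be $0$ since the roots of $F,G$ are $v$--integers). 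Granting $m_{i_0}\le m_j$, I then check condition iii) of Lemma \ref{Ord}: I need some $\gamma\in\Phi^{-1}(\{0,\infty\})$ with $v(c_{i_0}-a^m\gamma)\ge m_{i_0}$ and $v(c_j-a^m\gamma)\ge m_j$. For $w_j$: the argument in the proof of Lemma \ref{Ord} (ii $\Rightarrow$ iii) shows that since $w_j$ restricts to the Gauss valuation on $K(\Phi)$, writing $\Phi$ in the variable $y_j$ and demanding the reduction be transcendental forces the existence of a root $\gamma$ of the numerator or denominator of $\Phi$ — i.e. $\gamma\in\Phi^{-1}(\{0,\infty\})$ — with $v(c_j-a^m\gamma)\ge m_j$. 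For $w_{i_0}$: with $c_{i_0}=0$ and $a^m\gamma$ being $v$--integral (as $\gamma\in\mco$) and $m_{i_0}=0$, the inequality $v(c_{i_0}-a^m\gamma)=v(a^m\gamma)\ge 0=m_{i_0}$ is automatic. Hence iii) holds for the pair $(w_{i_0},w_j)$, so $w_{i_0}\le w_j$.

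Finally I would note uniqueness of the minimum is free: a partial order has at most one least element, and the antisymmetry (the $m_i=m_j$ case of Lemma \ref{Ord}) guarantees $\leq$ is a genuine partial order. So $w_{i_0}$ as constructed is \emph{the} minimum, and by construction it is the Gauss valuation with respect to $x$, with $y_{i_0}=x$.

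The main obstacle, and the point requiring the most care, is the bookkeeping around the exponents $m_i$ and the normalizing element $a^m$: I must pin down exactly whether the Gauss-with-respect-to-$x$ valuation corresponds to the \emph{smallest} or \emph{largest} $m_i$, and this hinges on the fact — established in Section \ref{intF} — that the chosen model has all roots of $F$ and $G$ in $\mco$, so that $x$ is already $w$--integral for the Gauss valuation $w=$ ``Gauss w.r.t.\ $x$'', forcing its normalization to have $m_{i_0}=0$ and $c_{i_0}=0$. Once that is nailed down, the verification of condition iii) for an arbitrary $j$ is a direct transcription of the transcendence argument already given inside the proof of Lemma \ref{Ord}, applied with the crucial input that $0,\infty\in\Phi(\mcr_\Phi)$ so that the relevant $\gamma$ can be taken among $\Phi^{-1}(\{0,\infty\})$, which is precisely what condition iii) demands.
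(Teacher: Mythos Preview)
Your strategy is the right one and, once completed, would be more direct than the paper's. The gap is exactly where you flag it: you need $m_{i_0}\le m_j$ for all $j$, and your proposed justification---that the roots of $F,G$ lie in $\mco$---does not by itself yield $w_j(x)\ge 0$ (equivalently $m=0$). Indeed, if $w_j(x)<0$ then $w_j(F(x))=w_j(G(x))=n\,w_j(x)$, so $w_j(\Phi)=0$, which is perfectly consistent with $w_j$ extending the Gauss valuation on $K(\Phi)$; the fibers over $0$ and $\infty$ alone give no contradiction.

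The paper circumvents this by a less direct route. It first proves that any two $w_i$ with a common upper bound are comparable; then it shows merely that each $w_j$ is \emph{comparable} to $w_k$ (the Gauss valuation with respect to $x$), a statement that requires no information about which of $m_k,m_j$ is smaller; from these two facts it deduces the existence of a minimum $w_{i_0}$; and finally it rules out $w_{i_0}<w_k$ by showing that this would force $\Phi_{i_0,v}$ to be constant. The $v$--integrality of the roots of $F,G$ enters only in this last contradiction step, not as an a~priori bound on the $m_i$.

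Your direct approach can be repaired, but it needs a \emph{third} branch value. Since $\infty\in\Phi^{-1}(1)$ with ramification index $e\ge 1$, one has $F-G=c_1H_1$ with $\deg H_1=n-e$, $v(c_1)=0$, and---by the work in Section~\ref{intF}---all roots of $H_1$ in $\mco$ with leading coefficient a $v$--unit. If $w_j(x)<0$ this gives
\[
w_j(\Phi-1)=w_j(F-G)-w_j(G)=(n-e)\,w_j(x)-n\,w_j(x)=-e\,w_j(x)>0,
\]
contradicting the fact that $\Phi-1$ has Gauss valuation $0$ in $K(\Phi)$. Hence $w_j(x)\ge 0$ for every $j$, so $m=0=m_{i_0}\le m_j$, and the rest of your argument (the verification of condition~iii) of Lemma~\ref{Ord}) goes through verbatim.
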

\begin{proof}Let the $y_i$'s, $m_i$'s and the $c_i$'s be as given in Lemma \ref{e3.1}Let $w$ be the Gauss valuation on $K(x)$. Since $\Phi(x)$ is written in the integral normal form as described in Section \ref{intF}, in particular the fact that the reduction $\Phi_v$ is not in $K_0$, we have that $w$ is an extension of the Gauss valuation on $K(\Phi)$. Therefore there exists an index $k\in\{1,\ldots,h\}$ such that $w=w_{k}$. We are going to prove that if there are three indexes $i_1,i_2,i_3$ such that 
\beq\label{to}w_{i_1}<w_{i_3},\quad w_{i_2}<w_{i_3},\eeq 
then we have either $w_{i_1}\leq w_{i_2}$ or  $w_{i_2}\leq w_{i_1}$.
We use the same notation as in Lemma \ref{e3.1}. By the above definition of ordering, according to Lemma \ref{Ord}, we have that the condition (\ref{to}) implies that $m_{i_1}<m_{i_3}$,  $m_{i_2}<m_{i_3}$ and there exists $\gamma_1,\gamma_2\in\SR(K)$ such that 
\beq\label{ci}v(c_{i_1}-\gamma_1)\geq m_{i_1},\ \ v(c_{i_3}-\gamma_1)\geq m_{i_3},\ \ v(c_{i_2}-\gamma_2)\geq m_{i_2},\ \ v(c_{i_3}-\gamma_2)\geq m_{i_3}.\eeq
Without loss of generality we may assume that $m_{i_1}\leq m_{i_2}$.
Thus we have to prove that there exists a $\gamma\in \SR(K)$ such that $v(c_{i_1}-\gamma)\geq m_{i_1}$ and $v(c_{i_2}-\gamma)\geq m_{i_2}$. We claim that $\gamma_2=\gamma$ has the previous property. From (\ref{ci}) we have
\beq\label{ic}v(c_{i_1}-c_{i_2}) =v(c_{i_1}-\gamma_1+\gamma_1-c_{i_3}+c_{i_3}-\gamma_2+\gamma_2-c_{i_2})\geq v(c_{i_1}-\gamma_1)= m_{i_1}\eeq 
Since $v(c_{i_2}-\gamma_2)\geq m_{i_2}$, it is enough to prove that $v(c_{i_1}-\gamma_2)\geq m_{i_1}$.
Let us suppose that $v(c_{i_1}-\gamma_2)<m_{i_1}$, then by (\ref{ic}) 
$$m_{i_1}\leq v(c_{i_1}-c_{i_2})=v(c_{i_1}-\gamma_2+\gamma_2-c_{i_2})=\min\{v(c_{i_1}-\gamma_2),v(\gamma_2-c_{i_2})\}=v(c_{i_1}-\gamma_2)<m_{i_1}$$
which is an absurd.

Since $w_k=w$ is the Gauss valuation with respect $x$ and also with respect to $y_k$, we have that $x=uy_k+t$ with some suitable $v$--unit $u$ and a $v$--integer $t$. Indeed, by Lemma \ref{e3.1} we have that $x=\frac{a^{m_k}}{a^m} y_k+\frac{t}{a^m}$. Hence if $m>m_k$ or $v(t)<m$ we have that $x$ reduces to the point $[1:0]$; if $m_k>m$ and $v(t)\geq m$, then $x$ reduces to a point in $K_0$. In both cases $w$ is not the Gauss valuation on $K(y_k)$, with respect $x$. Therefore we have $a^mx=a^muy_k+a^mt=a^{m_k}y_k+c_k$, where recall that either $c_k=0$ or $v(c_k)<m_k$. Since $y_k$ is a transcendental element over $K$, we have that $m=m_k$, $u=1$ and $c_k=a^mt$; but for our choice of $c_k$ we have that $c_k=0$, i.e. $x=y_k$. 

From our choice of the $v$--integral form we have that every point in the ramification fibers are $v$--integers (see last part in Section \ref{intF}), in particular over $0$ and $\infty$. Therefore, since $c_k=0$, we have that $v(c_k-a^m\delta)=v(a^m\delta)\geq m=m_k$ for each $\delta\in\Phi^{-1}(\{0,\infty\})$. Since $w_j$ is the extension of the Gauss norm on $K(\Phi)$ for all $j\in \{1,\ldots,h\}$, there exists a $\delta_j\in \Phi^{-1}(\{0,\infty\})$ such that $v(c_j-a^m \delta_j)\geq m_j$. So we have proven that either $w_j\geq w_k$ or $w_k\geq w_j$ for all  $j\in \{1,\ldots,h\}$. This proves that the graph given by the $w_i$'s is connected. Therefore, we deduce that there exists $i_0\in \{1,\ldots,h\}$ so that $w_{i_0}$ is the minimum of the order. Note that if the above order is not a total one, we see easily that $i_0=k$. But this holds in general, even if the order is total, indeed suppose that $w_{i_0}< w_k$. Thus $v(c_{i_0})<m_{i_0}<m_k$. We have $v\left(\frac{c_{i_0}-a^{m_k}\delta}{a^{m_{i_0}}}\right)<0$ for each $v$--integer $\delta$. Since every element of $\Phi^{-1}(\{0,\infty\})$ is a $v$--integer, the inequality $w_{i_0}< w_k$ would imply that $\Phi_{i_0,v}\in K_0$, that contradicts the choice of $y_{i_0}$.\end{proof}

By Lemma \ref{min}, up to renumbering the indexes, we may assume that $w_1$ is the minimum of the above order, so we have $x=y_1$. Let $m$ be the integer as in Lemma \ref{e3.1}, by Lemma \ref{Ord} we can assume that $m=m_1=0$.

For any index $i\in\{1,2,\ldots, h\}$ let us denote $U_i$ the subgraph of the vertices $w_j$ such that $w_j\geq w_i$. We shall call \emph{lenght of $U_i$} the maximal number $l$ such that there exists a chain $w_i=w_{i_0}<w_{i_1}<\ldots<w_{i_l}$. These objects are useful in the following application of the Riemann--Hurwitz Formula. Indeed, we want to evaluate the sum 

\beq\label{sRH}\sum_{i=1}^{h} \sum_{P\in \SR}\left(n_i-\#\{\Phi_{i,v}^{-1}(P)\}\right),\eeq
where recall that $n_i=\deg \Phi_{i,v}$.

As already remarked, our assumption about the characteristic $p$ implies that the extensions associated to the reduced maps $\Phi_{i,v}$ are separable. Thus, the Riemann--Hurwitz formula tells us that  $\sum_{P\in \SR}n_i-\#\{\Phi_{i,v}^{-1}(P)\}\leq 2n_i-2$. 
Therefore the sum in (\ref{sRH}) should be $\leq 2n-2h$. We are going to evaluate the sum in (\ref{sRH}), by evaluating each single $\#\Phi_{i,v}^{-1}(P)$. We shall prove that the sum in (\ref{sRH}) is  $\leq 2n-2h$ if and only if $h=1$, the map $\Phi$ does not have wild ramification and each ramification value of $\Phi_{v}$ is the reduction modulo $v$ of a ramification value of $\Phi$. 

In the next lines we shall define some technical objects useful in the remaining part of the present proof. For each branch point $\lambda$ (including $\infty$), we have 
$$\Phi_{i,v}^{-1}(\lambda)\subset \left\{\overline{\left(\frac{c_i-\gamma}{a^{m_i}}\right)}\mid \gamma\in \Phi^{-1}(\lambda)\setminus\{\infty\}, v(c_i-\gamma)\geq m_i\right\}\cup \{\infty\},$$ where $m_i$ and $c_i$ are the ones defined in Lemma \ref{e3.1}. 
When $\lambda\in \{0,\infty\}$, the above inclusion is completely clear because of
\beq\label{iv}\Phi_{i,v}(\overline{y}_i)=\frac{\overline{\prod_{l=1}^n(a^{m_i}y_i+c_i-\alpha_l)}}{\overline{\prod_{k=1}^n(a^{m_i}y_i+c_i-\beta_k)}}=\frac{A\prod_{k\in D}(\overline{y}_i+\overline{(c_i-\alpha_l)/a^{m_i}})}{B\prod_{k\in M}(\overline{y}_i+\overline{(c_i-\beta_r)/a^{m_i}})},\eeq
where $D=\{l\mid v(c_i-a^m\alpha_l)\geq m_i\}$, $M=\{k\mid v(c_i-a^m\beta_k)\geq m_i\}$ and $A$ and $B$ are the products of the reduction of the other factors after transformation of $\Phi_i(y_i)$ in a $v$--normalized form. Note that in the above fraction, some factors of the numerator and denominator of the non reduced map can simplify after reduction modulo $v$. Furthermore $\infty$ can be contained in $\Phi_{i,v}^{-1}(\lambda)$, e.g. if the degree of the two reduced polynomials of the numerator and denominator in (\ref{iv}) are different. 
For $\lambda\notin\{0,\infty\}$, it is enough to consider some composition on the left for some automorphism in $\PGL_2(\mco)$ that sends $\lambda$ to 0 and repeat the above arguments. 

For each index $i\in\{1,\ldots,h\}$, we denote by $R_i$ the set
$$R_i=\{\gamma\in\Phi^{-1}(\Phi( R_\Phi))\mid \gamma\neq \infty, v(c_i-\gamma)\geq m_i\}\cup \{\infty\}.$$

We shall give a technical definition (of counted point) that is useful in order to count the points in the fibers of the ${\Phi}_{i,v}$. More precisely it is useful to know, in a certain sense, for how many indexes $i\in\{1,\ldots,h\}$ the reduction modulo $v$ of an element in a fiber of $\Phi$ of a branch point is \emph{counted} as a point in a fiber of the maps ${\Phi}_{i,v}$'s. 

Let $\overline{\alpha}\in K_0$. For each $\mu\in \Phi(R_\Phi)$ consider the set 
$$A_{\mu, \overline{\alpha},i}=\{\gamma\in \Phi^{-1}(\mu)\cap R_i\mid \gamma\neq\infty,  \overline{(c_i-\gamma)/a^{m_i}}=\overline{\alpha}\}.$$


 For a fixed $\lambda\in \Phi(R_\Phi)$, consider the fiber $\Phi^{-1}(\lambda)$ and choose a labelling of its elements of the shape $\{\alpha_{\lambda,1},\ldots, \alpha_{\lambda,t}\}$. In this way we have fixed an order on this set, where $\alpha_{\lambda,i}\leq \alpha_{\lambda,j}$ if $i\leq j$. 
Moreover for an element $\gamma\in R_i$, let us denote by $l_i(\gamma)$ the number of indexes $k\in\{1,\ldots, h\}$ such that $w_i\leq w_k$ and $\gamma\in R_k$.  
With this notation we give the following definition.

\begin{Def}\label{defcount}
Let $\lambda\in \Phi(R_\Phi)$ and $\alpha_{\lambda,k}\in\{\alpha_{\lambda,1},\ldots, \alpha_{\lambda,t}\}=\Phi^{-1}(\lambda)$. Suppose that $\alpha_{\lambda,k}\in R_i$ and denote by  $\overline{\alpha}$ the reduction modulo $v$ of the $v$--integer $(c_i-\alpha_{\lambda,k})/a^{m_i}$, that is  $\alpha_{\lambda,k}\in A_{\lambda,\overline{\alpha}, i}$. We say that $\alpha_{\lambda,k}$ is \emph{counted in the fiber $\Phi_{i,v}^{-1}(\overline{\lambda})$} if the following conditions are verified:
\begin{enumerate}
\item $\# A_{\lambda,\overline{\alpha}, i}>\# A_{\mu,\overline{\alpha}, i}$ for all $\mu\in \Phi(R_\Phi)$ with $\mu\neq\lambda$;
\item $l_i(\alpha_{\lambda,k})= \min_{1\leq j\leq t}\{ l_j(\alpha_{\lambda,j})\}$;
\item $k$ is the minimum index such that the above condition 2. is verified.
\end{enumerate}
\end{Def}
Let us point out some remarks about this last definition. We show that the condition 1. in the above definition is necessary. Indeed, suppose that there exists a $\lambda$ different from $\mu$ such that $\# A_{\lambda,\overline{\alpha}, i}\leq\# A_{\mu,\overline{\alpha}, i}$. Up to taking $A\circ \Phi$ instead of $\Phi$, with $A$ a $v$--invertible automorphism in PGL$_2(\mco)$ that send $\lambda$ to 0 and $\mu$ to infinity, we have 
\beq\label{cl}\Phi_i(y_i)=\frac{f_i(y_i)\prod_{P\in  A_{\lambda,\overline{\alpha}, i} }\left(y_i-\frac{c_i-P}{a^{m_i}}\right)}{g_i(y_i)\prod_{Q\in  A_{\mu,\overline{\alpha}, i} }\left(y_i-\frac{c_i-Q}{a^{m_i}}\right)},\eeq
where the polynomials $f_i$ and $g_i$ are not divisible by any linear factors appearing in the products. Hence in the reduction $\Phi_{i,v}$ the product in the numerator in (\ref{cl}) disappears in the reduction modulo $v$, because there is cancellation with the product in the denominator (or a part of it). Therefore in this sense we can not say that an element in $A_{\lambda,\overline{\alpha}, i}$ is counted in the preimage of  $\lambda$ for the reduced map $\Phi_{i,v}$.

Note that for each $\overline{\alpha}\in \Phi^{-1}_{i,v}(\overline{\lambda})$, there exists a unique $\alpha_{\lambda,k}\in A_{\lambda,\overline{\alpha},i}$ counted in the fiber $\Phi^{-1}_{i,v}(\overline{\lambda})$.  Indeed Definition \ref{defcount} defines a correspondence from the set of the pairs $(\overline{\alpha},\Phi_{i,v})$  to the set $\Phi^{-1}(\Phi(R_\Phi))$. The condition 2. is given in order to obtain a correspondence, maybe not injective, but that looks like a injective correspondence as much as possible. Indeed first of all note that $R_j\subset R_i$ for each index $j$ such that $w_i<w_j$. Furthermore if $\alpha,\beta\in R_j$, then 
\beq\label{uni}v\left(\frac{c_i-\alpha}{a^{m_i}}-\frac{c_i-\beta}{a^{m_i}}\right)=v\left(\frac{c_j-\alpha}{a^{m_i}}-\frac{c_j-\beta}{a^{m_i}}\right)\geq m_j-m_i>0,\eeq
since $m_i<m_j$. Thus all points of $R_j$ reduce to the same point if they are considered as points in $R_i$. More concretely,  we have that there exists an $\overline{\alpha}\in K_0$ such that $\overline{\frac{c_j-\gamma}{a^{m_i}}}=\overline{\alpha}$ holds  for all elements $\gamma\in R_j$. For example the conditions 2. says that if there exist a unique $\beta\in R_i\setminus R_j$ such that $\overline{\frac{c_j-\beta}{a^{m_i}}}=\overline{\alpha}$ and condition 1. is verified, then in the Riemann--Hurwitz formula for the map $\Phi_{i,v}$ we \emph{choose} $\beta$ as a representative of the class $\overline{\alpha}$ in the fiber of $\Phi_{i,v}$ over $\overline{\lambda}$, instead of taking one of the elements in $R_j$. Each other $\gamma$ in $R_j$ such that $\overline{\frac{c_j-\gamma}{a^{m_i}}}=\overline{\alpha}$ could be counted by considering one of the maps $\Phi_{k,v}$ with $w_k\geq w_j$. Actually, the condition 3. is given only because we could have more than one index $k$ verifying the first two conditions.

Now we are ready to state a technical lemma useful in evaluating the sum in (\ref{sRH}). 

\begin{Lemma}\label{count}
Let $\Phi$, $K$, and the $R_i$'s be as above. For an arbitrary fixed $i\in \{1,\ldots, h\}$, let $i_1,\ldots, i_m\in \{1,\ldots, h\}$ be the set of indexes of all extensions $w_{i_s}$ such that $w_{i_s}>w_i$. Let us set $i=i_0$.
Then
\beq\label{spRH}\sum_{s=0}^{m} \sum_{\lambda\in \Phi(R_\Phi)}\#\{{\Phi}_{i_s}^{-1}(\overline{\lambda})\}\leq |R_{i_0}|+2m.\eeq
Furthermore if the above inequality is an equality, then all the following properties are verified: 
\begin{itemize}
\item[i)] for each $\gamma\in R_{i_0}$ there exists an index $i_s\in\{i_0, i_1,\ldots, i_m\}$ such that $\gamma$ is counted in a fiber of the shape ${\Phi}_{i_s,v}^{-1}(\lambda)$ for a ramification value $\lambda$ for $\Phi$;
\item[ii)] up to a permutation of the indexes, we can suppose that $\{i_1,\ldots, i_t\}$ is the full set of successors of $i_0$. For each $i_k\in \{i_1,\ldots, i_t\}$ there exists a (unique) $\gamma\in R_{i_0}\setminus\{\infty\}$ such that $\gamma$ is counted in ${\Phi}_{i_0,v}^{-1}(\overline{\lambda})$ and in ${\Phi}_{i_k,v}^{-1}(\overline{\lambda})$ for a ramification value $\lambda$ for $\Phi$;
\item[iii)]  for each $s\in\{0,\ldots,m\}$, the point at infinity is in a fiber of the shape $\Phi_{i_s,v}^{-1}(\overline{\lambda})$ for a ramification value $\lambda$ for $\Phi$. \end{itemize}
\end{Lemma}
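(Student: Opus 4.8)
The plan is to set up a bookkeeping of the points reduced modulo $v$ in the fibers of the maps $\Phi_{i_s,v}$ for all $s\in\{0,1,\ldots,m\}$, organized by the arborescence structure on $\{w_{i_0},w_{i_1},\ldots,w_{i_m}\}$ established in Lemmas \ref{Ord} and \ref{min}. First I would observe that, by the inclusion for $\Phi_{i,v}^{-1}(\lambda)$ recorded just before the statement (together with the analogous description for arbitrary branch values obtained by composing with a suitable $A\in\PGL_2(\mco)$), each point of $\Phi_{i_s,v}^{-1}(\overline\lambda)$ other than $\infty$ is of the form $\overline{(c_{i_s}-\gamma)/a^{m_{i_s}}}$ for some $\gamma\in R_{i_s}\setminus\{\infty\}\subset R_{i_0}\setminus\{\infty\}$. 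The key point is then to assign to each such point a \emph{single} $\gamma\in R_{i_0}$ that is ``counted'' in that fiber in the sense of Definition \ref{defcount}, and to argue that the total number of (point, map) incidences realized this way — summed over all $s$ and all $\lambda$ — is at most $|R_{i_0}|$ plus a correction of $2m$ coming from the $m$ extra vertices $w_{i_1},\ldots,w_{i_m}$.

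The main steps, in order, are as follows. (1) Show that for a fixed $s$ and fixed $\overline\alpha\in K_0$, among the branch values $\lambda\in\Phi(R_\Phi)$ there is at most one for which condition 1 of Definition \ref{defcount} holds, so that each class $\overline\alpha$ (equivalently each finite point of $\bigcup_\lambda\Phi_{i_s,v}^{-1}(\overline\lambda)$) determines a unique branch value and, via conditions 2 and 3, a unique counted $\gamma\in\Phi^{-1}(\lambda)$; this makes ``counted in $\Phi_{i_s,v}^{-1}(\overline\lambda)$'' a well-defined partial function from the finite points of the fibers to $R_{i_0}$. (2) Using the inequality (\ref{uni}) and the fact that $R_{i_s}\subset R_{i_0}$ whenever $w_{i_0}\le w_{i_s}$, show that a given $\gamma\in R_{i_0}$ can be counted in the fibers of at most the maps $\Phi_{i_s,v}$ for which $\gamma\in R_{i_s}$, i.e. in at most $l_{i_0}(\gamma)$ of them, and — crucially — that condition 2 of Definition \ref{defcount} forces $\gamma$ actually to be counted by the ``topmost possible'' maps, so that the number of indices $s$ in which $\gamma$ is counted is controlled by the length of the chains above it. (3) Handle the point $\infty$ separately: since $\infty\in R_{i_s}$ for every $s$ by definition, and since in the $v$-integral model of Section \ref{intF} the point $\infty$ lies over the branch value $1$ and is a ramification point, one checks that $\infty\in\Phi_{i_s,v}^{-1}(\overline 1)$ for every $s$, contributing exactly $m+1$ to the left-hand sum; this is where part (iii) comes from. (4) Combine: the finite points contribute at most $|R_{i_0}\setminus\{\infty\}| = |R_{i_0}|-1$ ``base'' incidences, plus at most one extra incidence for each of the $m$ vertices $w_{i_1},\ldots,w_{i_m}$ (reflecting the one ``doubled'' point attached to each successor edge, as in part (ii)); adding the $m+1$ from $\infty$ gives the bound $|R_{i_0}|-1+m+(m+1)=|R_{i_0}|+2m$, which is (\ref{spRH}). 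The equality analysis then reads backward through these inequalities: equality in (4) forces every $\gamma\in R_{i_0}$ to be counted somewhere (giving (i)), forces exactly one doubled finite point per successor edge (giving (ii)), and forces $\infty$ to be accounted for in all $m+1$ maps (giving (iii)).

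The hard part will be step (2), and more precisely the combinatorial argument that conditions 2 and 3 of Definition \ref{defcount} genuinely prevent overcounting: one must show that if $\gamma\in R_{i_s}\cap R_{i_t}$ with $w_{i_s}<w_{i_t}$ then $\gamma$ cannot be counted in both $\Phi_{i_s,v}^{-1}$ and $\Phi_{i_t,v}^{-1}$ unless the two corresponding reduced points are ``forced'' to coincide in a way that is compensated by the ``$+2m$'' term — i.e. one needs a careful induction on the length of $U_{i_0}$, peeling off the successors $w_{i_1},\ldots,w_{i_t}$ of $w_{i_0}$ and applying the inequality inductively to each subtree $U_{i_k}$. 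I would organize this induction so that the inductive hypothesis for $U_{i_k}$ gives $\sum\#\{\Phi_{\cdot,v}^{-1}(\overline\lambda)\}\le |R_{i_k}|+2(\text{size of }U_{i_k}\setminus\{i_k\})$, and then the passage from the subtrees to the whole tree costs exactly $|R_{i_0}\setminus\{\infty\}|-\sum_k|R_{i_k}\setminus\{\infty\}|$ (the finite points of $R_{i_0}$ not lying below any successor) plus $2t$ (two per successor edge: one for the doubled finite point in (ii), one for the $\infty$ bookkeeping), plus the base contribution from $\Phi_{i_0,v}$ itself — this additive accounting is exactly what must be checked not to leak, and I expect this to be the most delicate and calculation-heavy portion of the argument. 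Everything else — the description of the fibers, the well-definedness of ``counted'', and the treatment of $\infty$ — is routine given Definition \ref{defcount}, inequality (\ref{uni}), and the normalized model of Section \ref{intF}.
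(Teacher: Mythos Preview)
Your overall plan matches the paper's: induction on the length of $U_{i_0}$, partitioning $R_{i_0}$ according to the successors of $w_{i_0}$, and using (\ref{uni}) to collapse each $R_{i_k}$ to a single residue class at level $i_0$. Two concrete points in your sketch need correction, however.

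First, step (3) overclaims. It is not true in general that $\infty\in\Phi_{i_s,v}^{-1}(\overline 1)$ for every $s$: after the substitution $x=a^{m_{i_s}}y_{i_s}+c_{i_s}$ and renormalization, the value $\Phi_{i_s,v}(\infty)$ depends on the sizes of the sets $D$ and $M$ in (\ref{iv}) and, when these coincide, on a ratio of unit constants that has no reason to reduce to $\overline 1$. All the inequality needs is that $\infty$ contributes at most once per map, which is trivial; property (iii) is then \emph{deduced from} equality in (\ref{spRH}), not fed in as input. This is exactly how the paper proceeds.

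Second, the additive accounting in your final paragraph does not close as written: there is no ``passage cost'' separate from the root contribution, and your ``$|S_{i_0}|+2t$ plus base'' overshoots. The paper isolates the single key inequality
\[
\sum_{\lambda\in\Phi(R_\Phi)}\#\Phi_{i_0,v}^{-1}(\overline\lambda)\ \le\ |S_{i_0}|+t+1,
\]
where $S_{i_0}=R_{i_0}\setminus(R_{i_1}\cup\cdots\cup R_{i_t})$; this is precisely the collapse via (\ref{uni}) that you correctly identified. Combined with the inductive bounds for the subtrees (summing to $\sum_k|R_{i_k}|+2(m-t)$) and the disjoint-union identity $|R_{i_0}|=1+\sum_k|R_{i_k}|-t+|S_{i_0}|$, this yields (\ref{spRH}) in one line. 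Once you write the inductive step as that displayed inequality, the equality analysis for (i)--(iii) follows immediately by forcing it, together with the inductive inequalities, to be tight.
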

\begin{proof}
We prove the lemma by induction on the length $l$ of $U_{i_0}$.

Suppose $l=0$, that means that $w_{i_0}$ is a maximal element in the graph. Therefore $\sum_{P\in \Phi(R_\Phi)}\#\{{\Phi}_{{i_0},v}^{-1}(\bar{P})\}\leq |R_{i_0}|$ and the equality is verified if and only if i) and iii) hold, since ii) is trivially verified. 

Now we suppose the statement true when the length is $< l$ and we shall prove that it is true for $U_i$ of length $l$. Up to renumbering the indexes, we can suppose that $\{w_{i_1},\ldots, w_{i_t}\}$ is the set of all successors of  $w_{i_0}$. 
By induction we have 
\beq\label{R_i}\sum_{k=0}^{m} \sum_{\lambda\in \Phi(R_\Phi)}\#\{{\Phi}_{i_k,v}^{-1}(\overline{\lambda})\}\leq \sum_{\lambda\in \Phi(R_\Phi)}\#\{{\Phi}_{i_0,v}^{-1}(\overline{\lambda})\}+2(m-t)+\sum_{k=1}^{t}|R_{i_k}|.\eeq
Let us set $S_{i_0}= R_{i_0}\setminus(R_{i_1}\cup\ldots \cup R_{i_t})$. Therefore the following union is disjoint:
\beq\label{S_0}R_{i_0}=\{\infty \}\cup \left(R_{i_1}\setminus \{\infty\}\right)\cup\ldots \cup \left(R_{i_t}\setminus \{\infty\}\right)\cup S_{i_0}.\eeq
Then
\beq\label{cR_i}|R_{i_0}|=1+\sum_{k=1}^t|R_{i_k}| -t+|S_{i_0}|.\eeq
Note that by applying the inequality in (\ref{uni}) we see that by (\ref{S_0}) we have
\beq\label{Si0}\sum_{\lambda\in \Phi(R_\Phi)}\#\{{\Phi}_{i_0,v}^{-1}(\overline{\lambda})\}\leq |S_{i_0}|+1+t.\eeq
Putting this last inequality in (\ref{R_i}) we obtain (\ref{spRH}), since (\ref{cR_i}) holds.


Suppose now that the inequality in (\ref{spRH}) is an equality, then (\ref{R_i}) and (\ref{Si0}) are equalities. Condition i) is verified, indeed if $\gamma \in S_{i_0}\cup\{\infty\}$ consider the equality in (\ref{Si0}). If $\gamma\in R_{i_0}\setminus S_{i_0}$ consider (\ref{R_i}) and the inductive hypothesis. Condition ii) holds by equality (\ref{Si0}). Condition iii) is verified by (\ref{Si0}) for $s=0$ and by (\ref{R_i}) and the inductive hypothesis for any other $s\neq 0$. \end{proof}

\begin{Lemma}\label{grad}
Let $\Phi$, $K$, and the $R_i$'s be as above. For an arbitrary fixed $i\in \{1,\ldots, h\}$, let $i_1,\ldots, i_m\in \{1,\ldots, h\}$ be the set of indexes of all extensions $w_{i_s}$ such that $w_{i_s}>w_i$. Let us set $i=i_0$. Let $A_i\coloneqq R_i\cap \Phi^{-1}(0)\setminus\{\infty\}$ and $B_i\coloneqq R_i\cap \Phi^{-1}(\infty)\setminus\{\infty\}$. Let $a_i\coloneqq \sum_{\alpha\in A_i}e_\Phi(\alpha)$ and similarly $b_i\coloneqq \sum_{\beta\in B_i}e_\Phi(\beta)$. Suppose that in (\ref{spRH}) the equality holds, then 
\beq\label{cdeg} \sum_{k=0}^{m}\deg({\Phi}_{i_k,v})\geq \max\{a_{i_0},b_{i_0}\}.\eeq
If  at least one element $\gamma$, of the type as described in ii) of the Lemma \ref{count}, is in $A_{i_0}\cup B_{i_0}$, then (\ref{cdeg}) is a strict inequality.
\end{Lemma}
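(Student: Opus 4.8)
The plan is to reduce by symmetry to the $0$--side, express the reduced degrees through the fibres over $0$ and $\infty$, and then induct on the arborescence of the $w_i$'s. First, the statement is invariant under the involution $\iota\colon x\mapsto 1/x$, which lies in $\PGL_2(\mco)$ and interchanges the branch values $0$ and $\infty$ of $\Phi$: replacing $\Phi$ by $\iota\circ\Phi$ does not change $K(\Phi)$, hence leaves the $w_i$, the $y_i$, the $m_i$, the $c_i$ and every $R_i$ unchanged, it swaps $\Phi^{-1}(0)$ with $\Phi^{-1}(\infty)$ and thus $A_i$ with $B_i$ and $a_i$ with $b_i$, and it does not change any $\deg(\Phi_{i_k,v})$. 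So it suffices to prove $\sum_{k=0}^m\deg(\Phi_{i_k,v})\ge a_{i_0}$; the bound with $b_{i_0}$ then follows by applying this to $\iota\circ\Phi$.

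Next I would compute $\deg(\Phi_{i_k,v})$. Taking $\Phi$ in the integral form of Section \ref{intF}, passing to the variable $y_{i_k}$ as in Remark \ref{resdeg} and reducing modulo $v$, formula (\ref{iv}) shows that, up to a nonzero constant, the numerator of $\Phi_{i_k,v}$ \emph{before} simplifying with the denominator equals $\prod_{\alpha\in A_{i_k}}\bigl(\overline{y}_{i_k}-\overline{(c_{i_k}-\alpha)/a^{m_{i_k}}}\bigr)^{e_\Phi(\alpha)}$, because every root of $F$ outside $R_{i_k}$ reduces to a $v$--unit; so this numerator has degree exactly $a_{i_k}$, and symmetrically the denominator has degree $b_{i_k}$. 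Reducing to lowest terms, $\deg(\Phi_{i_k,v})=\max\{a_{i_k},b_{i_k}\}-d_{i_k}$, where $d_{i_k}\ge 0$ is the degree of the greatest common divisor of these two polynomials; in particular $\deg(\Phi_{i_k,v})\ge a_{i_k}-d_{i_k}$.

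The core of the argument is an induction on the length $l$ of the subgraph $U_{i_0}$, using the disjoint decomposition (\ref{S_0}) of $R_{i_0}$ into $\{\infty\}$, the sets $R_{i_1}\setminus\{\infty\},\dots,R_{i_t}\setminus\{\infty\}$ over the successors $i_1,\dots,i_t$ of $i_0$, and a remainder $S_{i_0}$. Intersecting with $\Phi^{-1}(0)$ (which avoids $\infty$) gives $a_{i_0}=\sum_{k=1}^t a_{i_k}+\sigma$ with $\sigma=\sum_{\alpha\in S_{i_0}\cap\Phi^{-1}(0)}e_\Phi(\alpha)$. Applying the inductive hypothesis to each successor and summing over $k=1,\dots,t$ accounts for every $\deg(\Phi_{i_{k'},v})$ with $k'\ge 1$ and leaves us to prove $\deg(\Phi_{i_0,v})\ge\sigma$, i.e. $d_{i_0}\le\sum_{k=1}^t a_{i_k}$. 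This holds because the $\sigma$ linear factors of the numerator produced at level $i_0$ by the roots $\alpha\in S_{i_0}\cap\Phi^{-1}(0)$ are never cancelled in $\Phi_{i_0,v}$: a cancellation would pair such an $\alpha$ with a root $\beta$ of $G$ having the same reduction under $w_{i_0}$, and then $v(\alpha-\beta)>m_{i_0}$, which by Lemma \ref{Ord} and (\ref{uni}) forces $\alpha$ and $\beta$ into a common strictly finer $R_j$, contradicting $\alpha\in S_{i_0}$. The base case $l=0$ is the same statement with $t=0$ and no cancellation. For the strict inequality, a $\gamma$ of the type in part ii) of Lemma \ref{count} that lies in $A_{i_0}\cup B_{i_0}$ — say a root of $F$ in $R_{i_0}$, also in a successor fibre $R_{i_k}$ and counted both in $\Phi_{i_0,v}^{-1}(\overline{0})$ and in $\Phi_{i_k,v}^{-1}(\overline{0})$ — carries its multiplicity $e_\Phi(\gamma)\ge 1$ through to both $\deg(\Phi_{i_0,v})$ and $\deg(\Phi_{i_k,v})$ without being cancelled at either node, i.e. once more than the identity $a_{i_0}=\sum_{k=1}^t a_{i_k}+\sigma$ allows for, so (\ref{cdeg}) becomes strict.

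The step I expect to be the main obstacle is the cancellation claim inside the induction: converting a coincidence of reductions modulo $w_{i_0}$ between a root of $F$ and a root of $G$ into the existence of a strictly finer extension $w_j>w_{i_0}$ that contains both of them, while keeping track of the multiplicities $e_\Phi$. This is precisely where one must invoke that the $w_j$ form exactly the list of extensions provided by Lemma \ref{e3.1}, together with the order-theoretic content of Lemmas \ref{Ord} and \ref{min} and the congruence (\ref{uni}).
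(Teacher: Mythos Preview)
Your inductive scheme is close to the paper's, but the key step you flag as ``the main obstacle'' is a genuine gap, and the way you propose to close it does not work. You want to show that for $\alpha\in S_{i_0}\cap\Phi^{-1}(0)$ no $\beta\in B_{i_0}$ has the same reduction at level $i_0$, and you argue that such a coincidence would force $\alpha,\beta$ into a common $R_j$ with $w_j>w_{i_0}$. Nothing in Lemmas~\ref{e3.1}, \ref{Ord}, \ref{min} or the identity~(\ref{uni}) gives you this: those results describe the tree structure of the \emph{given} extensions $w_1,\dots,w_h$, but they do not say that every pair of $v$--close points in $\Phi^{-1}(\{0,\infty\})$ is separated by one of them. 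Producing such a $w_j$ amounts to checking that a suitably recentred Gauss valuation restricts to the Gauss valuation on $K(\Phi)$, which is an extra computation you have not carried out and which is not automatic.

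The paper closes the gap differently, and this is exactly where the hypothesis ``equality in~(\ref{spRH})'' is used inside the induction (you never invoke it except through the inductive hypothesis). From the proof of Lemma~\ref{count}, equality in~(\ref{spRH}) forces equality in~(\ref{Si0}); that equality means the $|S_{i_0}|$ points of $S_{i_0}$, the $t$ successor clusters and $\infty$ all have pairwise distinct reductions at level $i_0$. In particular no $\alpha\in S_{i_0}$ collides with anything in $R_{i_0}$, so its factor survives, and all cancellation at level $i_0$ comes from the clusters $R_{i_k}$, contributing exactly $\min\{a_{i_k},b_{i_k}\}$ each. This yields the identity
\[
\deg(\Phi_{i_0,v})=\max\{a_{i_0},b_{i_0}\}-\sum_{k=1}^{t}\min\{a_{i_k},b_{i_k}\},
\]
which combined with the inductive bound $\sum_{w_j\ge w_{i_k}}\deg(\Phi_{j,v})\ge\max\{a_{i_k},b_{i_k}\}$ gives~(\ref{cdeg}) directly, without your symmetry reduction. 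For the strict inequality, a $\gamma$ as in~ii) lying in $A_{i_0}\cup B_{i_0}$ is counted in $\Phi_{i_0,v}^{-1}(\bar0)$ (say), which means the cluster $R_{i_k}$ actually contributes a surviving numerator factor at level $i_0$, hence $a_{i_k}>b_{i_k}$; then $\max\{a_{i_k},b_{i_k}\}>\min\{a_{i_k},b_{i_k}\}$ and the inequality is strict. Your version of the strict part is too vague: you need this multiplicity comparison, not just that $\gamma$ ``carries its multiplicity through''.
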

\begin{proof}
We prove the inequality (\ref{cdeg}) by induction on the length of $U_{i_0}$. Since the equality in (\ref{spRH}) holds, then the properties i), ii) and iii) of Lemma \ref{count} are verified. 
Suppose that the length of $U_{i_0}$ is zero. Therefore each element of $R_{i_0}$ is counted in ${\Phi}_{i,v}$. That implies that any two different elements $\gamma_1,\gamma_2\in R_{i_0}$ reduce to two different elements
$\overline{\frac{c_i-\gamma_1}{a^{m_i}}}, \overline{\frac{c_i-\gamma_1}{a^{m_i}}}$ in $K_0$. Therefore we have that 
$${\Phi}_{i,v}(y_i)=\frac{\prod_{\alpha\in A_i}\left(y_i- \overline{\frac{c_i-\alpha}{a^{m_i}}}\right)^{e_\Phi(\alpha)}}{\prod_{\beta\in B_i}\left(y_i- \overline{\frac{c_i-\beta}{a^{m_i}}}\right)^{e_\Phi(\beta)}},$$
where there is no cancellation between the numerator and the denominator. Therefore (\ref{cdeg}) holds, more precisely it is an equality.

We suppose the inequality (\ref{cdeg}) is true when the length of a $U_{i_0}$ is $< m$ and we shall prove that it is true for $U_{i_0}$ of length $m$. Up to renumbering the indexes, we can suppose that $\{w_{i_1},\ldots, w_{i_t}\}$ is the set of all successors of  $w_i$. Note that if the equality in (\ref{spRH}) holds, then similar equalities hold for each successor $w_{i_k}$ of $w_{i_0}$.

By the inductive hypothesis we have 
$$\sum_{k=0}^{m}\deg({\Phi}_{i_k,v})=\deg({\Phi}_{i_0,v})+\sum_{k=1}^t\sum_{w_j\geq w_{i_k}} \deg({\Phi}_{j,v})\geq \deg({\Phi}_{i_0,v})+\sum_{k=1}^t\max\{a_{i_k},b_{i_k}\}.$$
Therefore it is enough to prove the following inequality
\beq\label{C}\deg({\Phi}_{i_0,v})+\sum_{k=1}^{t}\max\{a_{i_k},b_{i_k}\}\geq \max\{a_{i_0},b_{i_0}\},\eeq
that is implied by the equality 
\beq\label{id0}\deg({\Phi}_{{i_0},v})+\sum_{k=1}^{t}\min\{a_{i_k},b_{i_k}\}=\max\{a_{i_0},b_{i_0}\}.\eeq
Hence, in  order to prove (\ref{cdeg}) it is sufficient to prove the equality (\ref{id0}). Since in (\ref{spRH}) we have the equality, then every element in $A_{i_0}\setminus (A_{i_1}\cup \ldots\cup A_{i_t})$ and  $B_{i_0}\setminus (B_{i_1}\cup \ldots\cup B_{i_t})$ is counted. Therefore $\deg({\Phi}_{{i_0},v})$ is equal to $\max\{a_{i_0},b_{i_0}\}$ minus the number of cancellations in the numerator and denominator of the elements in $A_{i_k}$ and $B_{i_k}$ for each $k\in\{1,\ldots, h\}$. Recall that for each fixed index $j$ such that $w_j>w_{i_0}$, all points of the shape $\frac{c_{i_0}-\gamma}{a^{m_{i_0}}}$ with $\gamma\in R_j$ collide to the same point modulo $v$. Thus we have 
$$\deg({\Phi}_{{i_0},v})=\max\{a_{i_0},b_{i_0}\}-\sum_{k=1}^{t}\min\{a_{i_k},b_{i_k}\},$$
that proves (\ref{id0}).

Now suppose that there exists one element $\gamma$ as described in ii) of the Lemma \ref{count} that belongs to $A_{i_0}\cup B_{i_0}$. Thus, $\gamma$ is counted in ${\Phi}_{i_0,v}^{-1}(\overline{\lambda})$ and in ${\Phi}_{i_k,v}^{-1}(\overline{\lambda})$ for a suitable successor $w_{i_k}$ of $w_{i_0}$ and $\overline{\lambda}\in\{0,\infty\}$. In particular, this means that $\gamma$ is in $ R_{i_k}$ and is counted in ${\Phi_{{i_k},v}}^{-1}(\overline{\lambda})$. Therefore $\max\{a_{i_k},b_{i_k}\}-\min \{a_{i_k},b_{i_k}\}>0$. Hence the inequality in (\ref{C}) is strict and the one in (\ref{cdeg}) too.
\end{proof}

The final part of the proof is an application of Lemma \ref{count} and Lemma \ref{grad} with ${i_0}=1$. Note that $R_1=\Phi^{-1}(\Phi(R_\Phi))$. We have
\begin{align}2n-2h&\geq\sum_{k=1}^{h} \sum_{\gamma\in \SR(K_0)}\left(n_i-\#\{{\Phi}_{i_k,v}^{-1}(\gamma)\}\right)\label{tame} \\&\geq\sum_{k=1}^{h} \sum_{P\in \Phi(R_\Phi)}\left(n_i-\#\{{\Phi}_{i_k,v}^{-1}(\bar{P})\}\right)\label{extra}\\
&= \sum_{P\in \Phi(R_\Phi)}\sum_{k=1}^{h}\left(n_i-\#\{{\Phi}_{i_k,v}^{-1}(\bar{P})\}\right)\nonumber\\
&= \sum_{P\in \Phi(R_\Phi)}n-\sum_{P\in \Phi(R_\Phi)}\sum_{k=1}^{h}\#\{{\Phi}_{i_k,v}^{-1}(\bar{P})\}\nonumber\\
&\geq n|\Phi(R_\Phi)|-|R_1|-2(h-1)\label{id}\\
&= \sum_{P\in \SR(K)}\left(n-\#\{{\Phi}^{-1}({P})\}\right)-2(h-1)\nonumber\\
&= 2n-2h.\nonumber\end{align}
In (\ref{id}) we have applied Lemma \ref{count}. It is clear that the inequality in (\ref{tame}), (\ref{extra}) and  (\ref{id}) have to be identities. Therefore, we have that the properties i), ii) and iii) of Lemma \ref{count} hold. Suppose $h>1$; up to considering a change $A\circ\Phi$ for a suitable $A\in\PGL_2(\mco)$, we can suppose that  $\gamma\in A_1\cup B_1$ for one of the $\gamma$ as in ii). Therefore by Lemma \ref{grad} we have that 
$$\sum_{k=1}^{h}\deg({\Phi}_{i_k,v})=\sum_{k=1}^{h}n_k> \max\{a_1,b_1\}=n,$$
that contradicts (\ref{degsum}). This proves $h=1$.

\section{Critically good reduction}\label{cgr}
The proof of the Theorem \ref{iff} in an application of the following two results. 
\begin{Teo}[\cite{Silverman}]\label{tsil}
Let $K$ be a number field and $v$ a non archimedean
valuation. Let $\Phi$ and $\Psi$ be two endomorphisms of $\SR$
with simply good reduction at $v$. Denote by $\Phi_v$ and $\Psi_v$ the
reductions modulo $v$ of $\Phi$ and $\Psi$ respectively. Then the
composition $\Phi\circ\Psi$ has good reduction at $v$ and its
reduction $(\Phi\circ\Psi)_v$ is such that
$$ (\Phi\circ\Psi)_v=\Phi_v\circ\Psi_v.$$
\end{Teo}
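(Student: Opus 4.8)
The plan is to work with explicit $v$-normalized forms and show that under simple good reduction the composition behaves as expected. Write $\Phi(x) = F(x)/G(x)$ and $\Psi(x) = P(x)/Q(x)$ in $v$-normalized form, so that $F,G \in \mco[x]$ have no common factor with at least one coefficient a $v$-unit, and similarly for $P,Q$. The hypothesis that $\Phi$ has simple good reduction means the homogeneous resultant $\mathrm{Res}(F,G)$ is a $v$-unit, and likewise $\mathrm{Res}(P,Q) \in \mco^*$. I would first homogenize: write $\Phi([X:Y])=[F_h(X,Y):G_h(X,Y)]$ with $F_h,G_h$ homogeneous of degree $d=\deg\Phi$, and similarly $\Psi$ with homogeneous forms of degree $e=\deg\Psi$. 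Then the composition is represented by $[F_h(P_h,Q_h):G_h(P_h,Q_h)]$, a pair of homogeneous forms of degree $de$ with coefficients in $\mco$.

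The key step is to verify that this representative of $\Phi\circ\Psi$ is itself $v$-normalized, i.e.\ that its two homogeneous coordinate forms have no common zero over $\overline{K}_0$ and that not all coefficients reduce to $0$. For this I would use the standard multiplicativity/composition property of the resultant: there is a formula expressing $\mathrm{Res}\big(F_h(P_h,Q_h),\,G_h(P_h,Q_h)\big)$ as a product of powers of $\mathrm{Res}(F_h,G_h)$ and $\mathrm{Res}(P_h,Q_h)$ (up to a sign), each raised to a positive exponent depending only on $d$ and $e$. Since both of these resultants are $v$-units by hypothesis, their product is a $v$-unit, hence the composite resultant is a $v$-unit. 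This simultaneously shows that the reduced forms have no common root (so the degree does not drop) and that the representative is genuinely $v$-normalized, which gives simple good reduction — in particular good reduction — for $\Phi\circ\Psi$. I expect locating or re-deriving the precise composition formula for resultants of homogeneous forms to be the main technical obstacle; it is classical (see \cite{Lang}) but the exponents must be tracked carefully to be sure they are positive.

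Finally, to identify the reduction, I would reduce the $v$-normalized representative $[F_h(P_h,Q_h):G_h(P_h,Q_h)]$ modulo $v$ coefficientwise. Because reduction of coefficients commutes with substitution and with forming homogeneous forms, the reduced representative is $[\overline{F}_h(\overline{P}_h,\overline{Q}_h):\overline{G}_h(\overline{P}_h,\overline{Q}_h)]$, which is exactly a representative of $\Phi_v\circ\Psi_v$. Since we have already shown this representative has the full degree $de$ (no cancellation), it computes the reduction $(\Phi\circ\Psi)_v$ with no loss, and therefore $(\Phi\circ\Psi)_v = \Phi_v\circ\Psi_v$, as claimed.
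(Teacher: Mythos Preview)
The paper does not prove this theorem; it is quoted from \cite{Silverman} and used as a black box in the proof of Theorem~\ref{iff}. Your argument is correct and is essentially the standard proof one finds in Silverman's book: homogenize, compose, and use the multiplicativity of the homogeneous resultant under composition to see that the composed pair of forms is already $v$-normalized with unit resultant, whence simple good reduction and the identification of the reduction follow. The only point to tidy up is the precise form of the composition identity for resultants, namely
\[
\mathrm{Res}\big(F_h(P_h,Q_h),\,G_h(P_h,Q_h)\big)=\mathrm{Res}(F_h,G_h)^{e}\cdot\mathrm{Res}(P_h,Q_h)^{d^{2}},
\]
which you should state rather than leave implicit; once this is in hand your deductions (unit resultant $\Rightarrow$ not all coefficients lie in the maximal ideal $\Rightarrow$ $v$-normalized form $\Rightarrow$ no drop in degree) go through exactly as you wrote.
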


This theorem implies that if $\Phi$ and $\Psi$ are as in Theorem \ref{tsil} and $\Phi_v$ and $\Psi_v$ are separable maps, then also the reduction of the composition $(\Phi\circ\Psi)_v$ is separable. The second tool for the proof of Theorem \ref{iff} is the following theorem.

\begin{Teo}[\cite{CanciPT}]\label{CPT}
Let $\Phi:\SR\to\SR$ be a morphism defined over a number field $K$.
Let $v$ be a finite place of $K$. Let $\Phi_v$ be the reduction modulo $v$ of $\Phi$. Let us suppose that  $\Phi_v$ is separable. Then  the following are equivalent:
\begin{itemize}
\item[a)]   $\Phi$ is C.G.R. at $v$;
\item[b)]   $\Phi$ is S.G.R. at $v$ and $\#\Phi(\mathcal{R}_{\Phi})=\#(\Phi(\mathcal{R}_{\Phi}))_v$.
\end{itemize}
\end{Teo}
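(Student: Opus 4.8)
The plan is to exploit that both (a) and (b) contain condition (\ref{crv}) [i.e. $\#\Phi(\mcr_\Phi)=\#(\Phi(\mcr_\Phi))_v$], so that under the standing hypothesis that $\Phi_v$ is separable the theorem reduces to the equivalence $\#\mcr_\Phi=\#(\mcr_\Phi)_v \Longleftrightarrow \Phi$ has S.G.R. at $v$. I write $\Phi=F/G$ in $v$-normalized form ($F,G\in\mco[x]$ coprime), set $d=\deg\Phi$, $d'=\deg\Phi_v$, and let $\bar C=\gcd(\bar F,\bar G)$, so that $\Phi_v=\bar F_0/\bar G_0$ with $\bar F_0=\bar F/\bar C$, $\bar G_0=\bar G/\bar C$ coprime of degree $d'$ and $\deg\bar C=d-d'$; thus S.G.R. means exactly $\bar C=1$. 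The three inputs I would use are: the Riemann--Hurwitz formula for $\Phi$ in characteristic $0$ (total ramification $2d-2$) and for the separable $\Phi_v$ (total different of degree $2d'-2$, local exponent $\geq e_{\Phi_v}(\bar P)-1$, equality iff tame); the reduction of fibre forms, namely that for $\lambda=[a:b]$ the degree-$d$ form $bF-aG$ reduces to $\bar C\cdot\Psi_\lambda$, where $\Psi_\lambda=\bar b\bar F_0-\bar a\bar G_0$ is the fibre form of $\Phi_v$ over $\bar\lambda$; and the ramification (Wronskian) form $\mathcal W$ of $\Phi$, for which a direct computation gives $\bar{\mathcal W}=\bar C^2\mathcal W_0$ with $\mathcal W_0$ the ramification form of $\Phi_v$. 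Since specialization of a $v$-normalized form preserves total root multiplicity, the roots of $\bar{\mathcal W}$ are exactly $(\mcr_\Phi)_v$, the multiplicity of $\bar P$ being $\sum_{P\to\bar P}(e_\Phi(P)-1)$.

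For (b)$\Rightarrow$(a) I would argue as in the usual good-reduction fibre comparison. Under S.G.R. ($\bar C=1$) each fibre form $bF-aG$ is a degree-$d$ form reducing to the degree-$d$ fibre form of $\Phi_v$, so $\#\Phi_v^{-1}(\bar\lambda_j)\leq\#\Phi^{-1}(\lambda_j)$ for each branch value $\lambda_j$, whence the ramification of $\Phi_v$ over $\bar\lambda_j$ is at least $d-\#\Phi^{-1}(\lambda_j)$, the ramification of $\Phi$ over $\lambda_j$. By (\ref{crv}) the $\bar\lambda_j$ are distinct, so summing over $j$ and comparing with the total $2d-2$ on both sides forces equality throughout: each branch fibre reduces injectively, $\Phi_v$ is tamely ramified, and all of its ramification lies over the $\bar\lambda_j$. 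Injectivity inside each fibre together with the distinctness of the $\bar\lambda_j$ then shows that all ramification points of $\Phi$ (each lying in some branch fibre) have distinct reductions, i.e. $\#\mcr_\Phi=\#(\mcr_\Phi)_v$, giving C.G.R.

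For (a)$\Rightarrow$(b) I would argue by contradiction, assuming $\bar C\neq1$ and using C.G.R. (in particular that the reduction is injective on $\mcr_\Phi$). Pick a root $\bar\theta$ of $\bar C$, of multiplicity $c\geq1$, and set $\bar\lambda_*=\Phi_v(\bar\theta)$. Since $\bar\theta$ is a root of $\bar{\mathcal W}$ it equals $\bar P_0$ for a unique $P_0\in\mcr_\Phi$, and injectivity gives
$$e_\Phi(P_0)-1=\mathrm{mult}_{\bar\theta}(\bar{\mathcal W})=2c+\mathrm{mult}_{\bar\theta}(\mathcal W_0)\geq 2c+\big(e_{\Phi_v}(\bar\theta)-1\big),$$
the last step being the tame separable bound for $\mathcal W_0$; hence $e_\Phi(P_0)\geq 2c+e_{\Phi_v}(\bar\theta)$. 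On the other hand, writing $\lambda_0=\Phi(P_0)$, the reduced fibre form over $\lambda_0$ has multiplicity $c+\mathrm{mult}_{\bar\theta}(\Psi_{\lambda_0})$ at $\bar\theta$, where $\mathrm{mult}_{\bar\theta}(\Psi_{\lambda_0})\leq e_{\Phi_v}(\bar\theta)$ (it equals $e_{\Phi_v}(\bar\theta)$ if $\bar\lambda_0=\bar\lambda_*$ and is $0$ otherwise); since $P_0$ alone contributes $e_\Phi(P_0)$ to this multiplicity,
$$e_\Phi(P_0)\leq c+\mathrm{mult}_{\bar\theta}(\Psi_{\lambda_0})\leq c+e_{\Phi_v}(\bar\theta).$$
Combining the two displays gives $2c+e_{\Phi_v}(\bar\theta)\leq c+e_{\Phi_v}(\bar\theta)$, i.e. $c\leq0$, contradicting $c\geq1$. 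Hence $\bar C$ has no roots, $\bar C=1$, and $\Phi$ has S.G.R.

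I expect the main obstacle to be the exact reduction bookkeeping of the two forms: proving cleanly that $\bar{\mathcal W}=\bar C^2\mathcal W_0$ and that $bF-aG$ reduces to $\bar C\,\Psi_\lambda$ (best carried out in a suitable $\mco$-integral affine chart, enlarging the residue field if it is too small so that $\infty$ may be taken unramified and outside the support of $\bar C$), and justifying that specialization transports multiplicities as claimed, including the separable inequality $\mathrm{mult}_{\bar\theta}(\mathcal W_0)\geq e_{\Phi_v}(\bar\theta)-1$. Once these local-to-global dictionaries are fixed both implications are short, and separability enters in exactly two places: to guarantee $\mathcal W_0\neq0$ (so $\bar{\mathcal W}\neq0$ and the specialization count is valid) and to supply the tame lower bound that makes the final inequality strict.
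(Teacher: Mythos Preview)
This theorem is not proved in the present paper: as the attribution in its header indicates, it is quoted from \cite{CanciPT} and used as a black box in the proof of Theorem~\ref{iff}. There is therefore no proof here against which to compare your attempt.

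For what it is worth, your argument is correct. The identity $\overline{\mathcal W}=\bar C^{\,2}\mathcal W_0$ follows from the product rule applied to $\bar F=\bar C\bar F_0$ and $\bar G=\bar C\bar G_0$, and separability of $\Phi_v$ guarantees $\mathcal W_0\neq 0$, hence $\overline{\mathcal W}\neq 0$, so the Wronskian has unit content and the multiplicity bookkeeping under reduction is valid. In (a)$\Rightarrow$(b) the two displayed estimates then force $c\leq 0$ exactly as you write; in (b)$\Rightarrow$(a) the Riemann--Hurwitz squeeze over the distinct reduced branch values is the standard argument, and injectivity on $\mcr_\Phi$ follows because under S.G.R.\ reduction commutes with $\Phi$, so two colliding ramification points would lie in the same branch fibre and hence coincide by the fibrewise bijection you obtain. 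The only genuinely delicate point is the one you flag yourself: arranging that $\infty$ lies outside the support of $\bar C$ and of $(\mcr_\Phi)_v$, which is handled either by working homogeneously throughout or by a change of chart in $\PGL_2(\mco)$ after possibly enlarging $K$ so that $K_0$ has enough rational points to avoid the finitely many bad residues.
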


\begin{proof}[Proof of Theorem \ref{iff}] 
Recall that the notions of simple good reduction and critically good reduction are preserved by taking finite extensions of $K$. Thus, we may assume that $\SR(K)$ contains all ramification points of $\Phi$. \\
Suppose that $\Phi$ is a finitely critical map. Let $\Phi^n$ be the $n$--th iterated map of $\Phi$. Denote by $\mathcal{B}_n$ the branch locus of $\Phi^n$, that is
$$\mathcal{B}_n=\Phi^n(\mathcal{R}_{\Phi^n})=\bigcup_{i=1}^n\Phi^i(\mathcal{R}_\Phi).$$
If $\mathcal{R}_\Phi$ contains a $K$--rational point not in
${\rm\bf PrePer}(\Phi,K)$, then the cardinality of
$\mathcal{B}_n(K)$ tends to $\infty$  with $n\to\infty$. Thus, the set of
valuations of $K$ of bad critically reduction of $\Phi^n$ grows up
with $n\to\infty$, in contradiction with the hypothesis that $\Phi$ is finitely critical.

Now we prove that the condition
(\ref{condition}) implies that $\Phi$ is finitely critical.
Thus we assume that  (\ref{condition}) holds. Then it follows that the set of postcritically points 
$${\rm PostCrit}_\Phi=\bigcup_{n\geq 1}\mathcal{B}_n$$
is a finite set. Therefore, there exists a finite set $S$ of
valuations of $K$ containing all the ones of bad simple reduction
of $\Phi$, all the valuations $w$ such that at least two different
points of ${\rm PostCrit}_\Phi$ collide modulo $w$ and all the valuations of $K$ that
are over a prime number less or equal the degree of $\Phi$. In
this way
we have that $\Phi$ has simple and critically good reduction at any valuation $v\notin S$ and the reduction map $\Phi_v$ is separable. \\
From Theorem \ref{tsil} we deduce that, for any
$n\geq 1$, the $n$--th iterate $\Phi^n$ has simple good reduction and the reduction $(\Phi^n)_v$ is separable
at any $v\notin S$. Furthermore by
$$\Phi^n(\mathcal{R}_{\Phi^n})=\bigcup_{i=1}^n\Phi^i(\mathcal{R}_\Phi)\subseteq {\rm PostCrit}_\Phi,$$
we have that $\#\Phi^n(\mathcal{R}_{\Phi^n})=\#(\Phi^n(\mathcal{R}_{\Phi^n}))_v$ for any $v\notin S$. Thus, by Theorem \ref{CPT}, we have that $\Phi^n$ has critical good reduction at any $v\notin S$ for all $n\in\N$.
\end{proof}

As an application of Theorem \ref{iff} we present the following two families of remarkable examples of rational maps.

\subsection{Quadratic polynomials} 
Let $K$ be a field that does not have characteristic $2$. We are considering endomorphisms of $\SR$ associated to quadratic polynomials of the form
\beq\label{qpol}\phi(x)=Ax^2+Bx+C\eeq
with $A,B,C\in K$. As remarked by Silverman in \cite[Section 4.2.1 p.156]{Silverman} each polynomial as in (\ref{qpol}) is conjugate via an element of ${\rm PGL}_2(K)$ to a polynomial of the form $x^2+c$ with $c\in K$.

Now we study the case $K=\Q$. As a corollary of Theorem \ref{iff} we have the following result.
\begin{Cor} The unique quadratic polynomials in $\Q[x]$ that are finitely critical maps are those conjugated to one of the following polynomials:
$$ x^2,\quad x^2-1, \quad x^2-2.$$
\end{Cor}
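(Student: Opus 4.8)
The plan is to apply Theorem \ref{iff}: a quadratic polynomial $\phi\in\Q[x]$ is finitely critical if and only if its unique (finite) critical point is preperiodic, so I must classify those $c\in\Q$ for which the critical point of $x^2+c$ is preperiodic over $\overline{\Q}$. First I would reduce to the normal form $\phi_c(x)=x^2+c$, using the remark of Silverman cited in the text that every quadratic polynomial over $K$ (char $\neq 2$) is $\mathrm{PGL}_2(K)$-conjugate to some $x^2+c$; since being finitely critical is invariant under $\mathrm{PGL}_2(\Q)$-conjugation (noted right after the definition of finitely critical), it suffices to treat $\phi_c$. The finite ramification point of $\phi_c$ is $x=0$ (the other is $\infty$, which is fixed, hence preperiodic), so by Theorem \ref{iff} the map $\phi_c$ is finitely critical precisely when $0\in\mathbf{PrePer}(\phi_c,\overline{\Q})$, i.e. when the forward orbit $0\mapsto c\mapsto c^2+c\mapsto\cdots$ is finite.

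Next I would determine for which rational $c$ the orbit of $0$ under $\phi_c$ is finite. The key observation is that if $c\in\Q$ then every point of the orbit of $0$ lies in $\Q$, so the orbit is finite if and only if it is a set of rational numbers that is bounded (equivalently, has bounded height). A clean way to get the bound: work at each non-archimedean place. If $v(c)<0$ for some finite place $v$, then one checks by induction that $v(\phi_c^n(0))\to-\infty$, so the orbit is infinite; hence $c$ must be a $v$-integer at every finite place, i.e. $c\in\Z$. Then at the archimedean place, a standard escape estimate for $x^2+c$ shows that if $|c|>2$ the orbit of $0$ escapes to infinity, and more precisely if $c\le -3$ or $c\ge 2$ one verifies the orbit is unbounded (for $c\ge1$ the orbit is strictly increasing after the first step, for $c\le-3$ one iterate lands outside $[-|c|,|c|]$ and then escapes). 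This leaves only finitely many candidates $c\in\{-2,-1,0\}$ (one also rules out $c=1$ by the monotone-escape argument and any remaining small values directly).

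Finally I would verify each surviving value directly: for $c=0$, $0$ is fixed; for $c=-1$, $0\mapsto -1\mapsto 0$ is a $2$-cycle; for $c=-2$, $0\mapsto -2\mapsto 2\mapsto 2$ is eventually fixed. So in each of these three cases the critical point is preperiodic and, by Theorem \ref{iff}, $\phi_c$ is finitely critical; and these are exactly the polynomials $x^2$, $x^2-1$, $x^2-2$. Conversely every finitely critical quadratic polynomial in $\Q[x]$ is $\mathrm{PGL}_2(\Q)$-conjugate to one of these, which is the statement of the corollary.

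The main obstacle is the archimedean escape argument: one needs a clean, fully rigorous proof that for integer $c$ with $|c|$ large (in fact for all $c\le-3$ and all $c\ge1$, leaving only $c\in\{-2,-1,0\}$) the orbit of $0$ is unbounded. This is the classical ``the orbit of the critical point escapes unless $c$ lies in the real Mandelbrot interval $[-2,1/4]$'' fact, but since we restrict to integers it is elementary: for $c\ge1$ an easy induction gives $\phi_c^{n}(0)\ge\phi_c^{n-1}(0)+1$ once $n\ge2$; for $c\le-3$ one computes $\phi_c^2(0)=c^2+c=c(c+1)\ge 6>|c|$ is false in general, so one instead tracks $|\phi_c^n(0)|$ and shows it eventually exceeds $|c|+1$ and then grows, e.g. via $|x|\ge |c|+1\Rightarrow |x^2+c|\ge |x|^2-|c|\ge |x|(|c|+1)-|c|>|x|$. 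Care is needed to handle the small cases $c=-2,-1,0,1$ by hand rather than by the generic estimate.
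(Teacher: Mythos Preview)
Your proof is correct and follows essentially the same approach as the paper: reduce to the normal form $\phi_c(x)=x^2+c$, apply Theorem~\ref{iff} to reduce to the question of when $0$ is preperiodic, deduce $c\in\Z$, and then eliminate all integers outside $\{-2,-1,0\}$ by an elementary growth argument. The only minor difference is in the integrality step: the paper observes that a preperiodic $c$ is a root of the monic integral polynomial $\phi_c^{n+m}(c)-\phi_c^m(c)$ (viewed as a polynomial in $c$), hence an algebraic integer in $\Q$, whereas you use a direct $p$-adic escape argument---both are standard and equally valid.
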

\begin{proof}As remarked before, any quadratic polynomial in $\Q[x]$ is conjugated to a polynomial of the type $\phi_c(x)=x^2+c$ with $c\in\Q$. For any $c\in\Q$ the map $\phi_c(x)$ ramifies at points $\infty$ and $0$. The point $\infty$ is clearly a fixed point. Hence, by Theorem \ref{iff} we have to verify whether $0$ is a preperiodic point for $\phi_c$. Note that $0$ is a preperiodic point for $\phi_c$ if and only if $c$ is a preperiodic point for $\phi_c$ too. Note that since $\phi_c$ is a monic polynomial, then for any $n,m\in\N$ and $n>0$, we have that if $\phi_c^{n+m}(c)= \phi^m_c(c)$, then $c$ is a zero of a monic polynomial with integral coefficients. Hence $c\in\Z$. Now it is clear that for any positive integer $c$, the point $0$ is not preperiodic for $\phi_c$; the same holds for any integer $c<-2$. On the contrary $0$ is a preperiodic point for all $c\in\{0,-1,-2\}$.
\end{proof}

\subsection{Latt\`es Maps}
Recall the definition of Latt\`es map:
\begin{Def}\label{Lmap}
A \emph{Latt\`es Map} is a map $\Phi\colon\SR\to\SR$ of degree $>1$ that fits into a 
commutative diagram
\begin{center}
\begin{equation}\label{diagram}
\begin{diagram}
\node{\ E}\arrow{e,t}{\psi} \arrow{s,l}{\pi}
\node{\ E}\arrow{s,r}{\pi}\\
\node{\ \SR}\arrow{e,t}{\Phi}\node{\SR}
\end{diagram}
\end{equation}

\end{center}
in which $E$ is an elliptic curve, the map $\psi$ is an endomorphism of $E$, and $\pi$ is a finite separable covering. The endomorphism $\Phi$ is called \emph{Flexible} Latt\`es Map, if the maps $\psi$ and $\pi$ have the following extra conditions: $\psi(P) = [m](P) +T$ for some $m\in\Z$ and some $T\in E$ and $\pi$ satisfies $\deg(\pi) = 2$ and $\pi(P ) = \pi(- P)$ for all $P\in E$.
\end{Def}

With some very insignificant modifications of Szpiro and Tucker's arguments in \cite{SzT} it is possible to see that any flexible latt\`es map is finitely critical. We resume briefly the Szpiro Tucker's argument. 

Let $K$ be a fixed number field. Let $m\geq 2$ be an integer. Let $E$ be an elliptic curve defined over $K$. Let $S$ be a finite set of valuations of $K$ that includes at least each prime lying over each prime dividing $2m$ and all primes of bad reduction for $E$. Take $S$ enlarged so that the ring of $S$--integers of $K$ is a P.I.D.. Take a planar model $y^2=F(x)$ where $F$ is a polynomial defined by $S$--integral coefficients with leading coefficient an $S$--unit (such a model exists as noted by Serre in \cite{Serre}). Let $T$ be a $K$--rational element in $E[2]$ (as usual $E[n]$ denotes the set of $n$--torsion points of $E$ defined over $\overline{\Q}$ for every positive integer $n$). Let $\psi\colon E\to E$ defined by $\psi(P)=mP+T$. Denote by $\Phi$ the endomorphism of $\SR$ which makes the diagram (\ref{diagram}) commutative, 
where $\pi$ in the diagram denotes the double cover $E\to\SR$ which sends $(x,y)\mapsto x$.
Following the Szpiro and Tucker's arguments in \cite[pages 719 and 720]{SzT} it is possible to see that $\Phi$ and all its iterates have critically good reduction at any valuation outside $S$: for each positive integer $n$, since $\psi^n$ is an endomorphism of a curves of genus 1, it is \'etale. From this it is possible to see that $\mcr_{\phi^n}$ and $\phi^n(\mcr_{\phi^n})$ are contained in the set of the $x$--coordinate of the points of $E[2m^n]$. From the choice of $S$, we know that the reduction modulo $v$ of the points in $E[2m^n]$ is injective (we mean that $P,Q\in E[2m^n]$ $P\neq Q$ if and only if $P_v\neq Q_v$, where $P_v$ and $Q_v$ denotes the reduction modulo $v$ of the points $P$ and $Q$ respectively). To see this injectivity it is sufficient to apply \cite[Proposition VII.3.1]{Silverman2}. From the planar model $y^2=F(x)$, we see that the map $\Phi^n$ has critical good reduction for all $v\notin S$. To extend these above arguments to the entire family of Flexible Latt\`es maps it is sufficient to apply \cite[Proposition 6.51]{Silverman}. 

As an application of Theorem \ref{iff} we have the following more general result.
\begin{Cor}
Latt\`es maps are finitely critical.
\end{Cor}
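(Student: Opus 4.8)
The plan is to deduce the corollary from Theorem \ref{iff} by verifying that every Latt\`es map satisfies condition (\ref{condition}), namely that $\mathcal{R}_\Phi\subset {\rm\bf PrePer}(\Phi,\overline{K})$. Since Theorem \ref{iff} already gives the equivalence between being finitely critical and (\ref{condition}), the whole task reduces to a statement about the dynamics of the critical points of a Latt\`es map. First I would recall the basic structure: given a Latt\`es map $\Phi$ fitting into the commutative diagram (\ref{diagram}) with $\psi\colon E\to E$ an isogeny (up to translation) and $\pi\colon E\to\SR$ a finite separable cover, the critical points of $\Phi$ are governed by the ramification of $\pi$ together with the (\'etale) map $\psi$. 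The key point is that $\psi$, being a nonconstant morphism of curves of genus $1$, is unramified, so by the chain rule applied to $\pi\circ\psi=\Phi\circ\pi$ the ramification points of $\Phi$ (pulled back via $\pi$) lie among the ramification points of $\pi$ and their $\psi$-preimages; since $\psi$ is \'etale this forces $\mathcal{R}_\Phi\subset \pi(\{\text{ramification points of }\pi\})\cup\pi(\psi^{-1}(\text{ramification points of }\pi))$, and all of these sit inside the finite set $\pi(E_{\mathrm{tors}})$ of images of torsion points, which is finite.

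The heart of the argument is then that $\Phi$ maps the finite set $C:=\pi(\{\text{branch points of }\pi\})$ into itself, up to a controlled enlargement. Concretely, I would argue that the postcritical set ${\rm PostCrit}_\Phi=\bigcup_{n\geq1}\Phi^n(\mathcal{R}_\Phi)$ is contained in $\pi(G)$ for a suitable finitely generated (hence, when intersected with torsion, finite) subgroup-coset $G\subset E$ stable under $\psi$: indeed $\psi$ maps torsion to torsion, the ramification locus of $\pi$ consists of finitely many points that generate a finite subgroup together with $T$, and iterating $\psi$ on this finite set stays finite because an isogeny has finite kernel and preserves the (divisible) torsion subgroup. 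Pushing forward via $\pi$ and using $\Phi\circ\pi=\pi\circ\psi$, one gets that ${\rm PostCrit}_\Phi$ is finite, which is exactly the statement $\mathcal{R}_\Phi\subset{\rm\bf PrePer}(\Phi,\overline{K})$: every critical point has finite forward orbit, hence is preperiodic. Invoking Theorem \ref{iff} then concludes that $\Phi$ is finitely critical.

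For the flexible case the excerpt already spells out the Szpiro--Tucker computation showing $\mathcal{R}_{\Phi^n}$ and $\Phi^n(\mathcal{R}_{\Phi^n})$ lie among $x$-coordinates of points of $E[2m^n]$, so there the finiteness of the postcritical set is transparent; I would simply note that the general (rigid or non-flexible) Latt\`es case reduces to the flexible one, or is handled directly, via \cite[Proposition 6.51]{Silverman}, which classifies Latt\`es maps and in particular shows that in every case $\psi$ is an isogeny composed with a translation and $\pi$ factors through a quotient of $E$ by a finite group of automorphisms; in all cases the critical values are images under $\pi$ of torsion points, and the postcritical set is finite. I expect the main obstacle to be the bookkeeping in the non-flexible case: one must check that for the various possible $\pi$ (quotients by automorphism groups of order $2,3,4,6$) the ramification of $\pi$ is still supported on torsion points of $E$ and that $\Phi$ permutes the resulting finite set of critical values in a way compatible with $\psi$; but since $\psi$ is always \'etale and sends torsion to torsion, this is a finite check rather than a genuine difficulty, and it can be outsourced entirely to \cite[Proposition 6.51]{Silverman}.
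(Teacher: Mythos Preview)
Your overall strategy matches the paper's: invoke Theorem~\ref{iff} and show that every Latt\`es map is post-critically finite. The paper does this in one line by citing \cite[Proposition~6.45]{Silverman}, which states that ${\rm PostCrit}_\Phi=\pi(\mathcal{R}_\pi)$, the branch locus of $\pi$; since $\pi$ is a finite cover this set is finite, and that is the entire proof.

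Your hands-on argument, however, has a gap: the torsion-point bookkeeping is both unnecessary and incorrect as written. The set $\pi(E_{\rm tors})$ is not finite (torsion points are Zariski-dense in $E$), and there is no a~priori reason the ramification locus of $\pi$ should generate a \emph{finite} subgroup of $E$ or that iterating $\psi$ on it ``stays finite'' for the reasons you give. The clean argument uses no torsion at all and is already latent in your first paragraph: since $\psi$ is \'etale, comparing ramification indices in $\Phi\circ\pi=\pi\circ\psi$ gives $e_\pi(\psi(Q))=e_\pi(Q)\,e_\Phi(\pi(Q))$ for every $Q\in E$; hence $\psi(\mathcal{R}_\pi)\subset\mathcal{R}_\pi$ and $\psi\bigl(\pi^{-1}(\mathcal{R}_\Phi)\bigr)\subset\mathcal{R}_\pi$, so pushing down by $\pi$ yields $\Phi(\pi(\mathcal{R}_\pi))\subset\pi(\mathcal{R}_\pi)$ and $\Phi(\mathcal{R}_\Phi)\subset\pi(\mathcal{R}_\pi)$. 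Thus ${\rm PostCrit}_\Phi\subset\pi(\mathcal{R}_\pi)$, which is finite. This is exactly \cite[Proposition~6.45]{Silverman} (not 6.51, which is the classification); once you cite it, Theorem~\ref{iff} finishes the proof with no case analysis needed.
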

\begin{proof} Let $\Phi$ be a Latt\`es map associated to an elliptic curve $E$ and a finite cover $\pi$ as in Definition \ref{Lmap}. 
From Proposition 6.45 in \cite{Silverman} we have that the set of postcritically points $ {\rm PostCrit}_\Phi$ is exactly the set $\pi(\mcr_\pi)$. Hence ${\rm PostCrit}_\Phi$ is a finite set and so condition  (\ref{condition})  holds. 
\end{proof}


\begin{thebibliography}{99}
\bibitem[Bk]{Beckmann} S. Beckmann. \emph{Ramified Primes in the Field of Moduli of Branched Coverings of Curves}, Journal of Algebra, 125, 1989, 236-255.
\bibitem[CPT]{CanciPT} J.K. Canci, G. Peruginelli, D. Tossici. \emph{On some notions of good reduction for endomorphisms of the projective line}, Manuscripta Math., 141(1-2):315-331, 2013.
\bibitem[DGS]{DworkGS} B. Dwork, , G. Gerotto, F. Sullivan. \emph{An Introduction to $G$--functions}, Princeton University Press, 1994.
\bibitem[DR]{DworkRobba} B. Dwork,  P. Robba. \emph{On natural radii of $p$--adic convergence}, Transactions of the American Mathematical Society, 256, 1979, 199-213.
\bibitem[Fu]{Fulton} W. Fulton. \emph{Hurwitz schemes and irreducibility of algebraic curves}, Annals of Mathematics (2), 90, 1994, 97-110.
\bibitem[J]{Jacobson}N. Jacobson. \emph{Basic Algebra II}, 2nd Ed., W. H. Freeman \& Company Publishers, San
Francisco, 1989.
\bibitem[La]{Lang}
S. Lang
\emph{Algebra-Revised third edition}, {Springer-Verlag}, GTM {\bf 211}, {2002}.
\bibitem[MS]{MS} {\sc P. Morton, J.H. Silverman}, \textit{Periodic points, multiplicities, and dynamical
units}, J. Reine Angew. Math. {\bf 461} (1995), 81-122.
\bibitem[Schn]{Schneps} L. Schneps. \emph{The Grothendieck Theory of Dessin d'Enfants}, London Mathematical Society Lecture Notes Serie, 200, Cambridge University Press, 1994.
\bibitem[Ser]{Serre} J.-P. Serre. \emph{Abelian l-adic representations and elliptic curves}, volume 7 of
Research Notes in Mathematics. A K Peters Ltd., Wellesley, MA, 1998.
With the collaboration of Willem Kuyk and John Labute, Revised reprint
of the 1968 original.
\bibitem[Sil1]{Silverman} J. H. Silverman. \emph{The Arithmetic of Dynamical Systems}, Springer-Verlag, GTM 241, 2007.
\bibitem[Sil2]{Silverman2}J. H. Silverman. The arithmetic of elliptic curves, volume 106 of Graduate
Texts in Mathematics. Springer-Verlag, New York, 1986.
\bibitem[SzT]{SzT} L. Szpiro, T. Tucker. \emph{A Shafarevich-Faltings Theorem for Rational Functions}. Quarterly Journal of Pure and Applied Mathematics, for the occasion of the sixtieth birthday of Fedor Bogomolov, 322, (2009), 3345-3365.
\bibitem[Z1]{Uz1} U. Zannier. \emph{Some remarks on the S-unit equation in function fields}, Acta Arithmetica
64, 1993, 87-98.
\bibitem[Z2]{Uz2} U. Zannier. \emph{On Davenport's bound for the degree $f^3-g^2$ and Riemann's Existence Theorem}, Acta Arithmetica, 75 , 1995, 107-137.
\bibitem[Z3]{Uz3} U. Zannier. \emph{Good reduction for certain covers $\SR\to\SR$}, Israel Journal of Mathematics, 124, 2001, 93-114.
\end{thebibliography}
\end{document}